\documentclass[11pt,reqno,draft]{amsproc} 
\usepackage{amstext,amsmath,amssymb,amsthm}

\usepackage[T1]{fontenc}
 
\usepackage{tikz}
\usepackage{tikz-3dplot}

\usepackage{latexsym}
\usepackage{exscale}
\usepackage{color}
 
\newcommand{\Primes}{{ \mathbb  P  }}

\newcommand{\C}{  \mathbb  C }
\newcommand{\Z}{  \mathbb Z }
\newcommand{\N}{  \mathbb N }

\newcommand{\ov}{\overline}

\newtheorem{Thm}{Theorem}[section]
\newtheorem{Lemma}[Thm]{Lemma}
\newtheorem{Cor}[Thm]{Corollary}
\newtheorem{Prop}[Thm]{Proposition}
\newcommand{\dsize}{\displaystyle}
\newcommand{\cal}{\mathcal}
\numberwithin{equation}{section}
\newcommand{\E} {{\mathcal E}}

\renewcommand{\H} {\mathcal H} 
\renewcommand{\P} {\mathcal P}

\newcommand{\Mod}[1]{\, (\mathrm{mod}\, #1)}

\theoremstyle{remark}

\newtheorem{Rem}{Remark}[section]

\newcommand{\BEL}{\begin{equation}\label}
	\newcommand{\EE}{\end{equation}}

\begin{document}
	
	\title[Cyclotomy of symmetric polynomials] {\large  Cyclotomy of symmetric polynomials}

	\author{L. De Carli}
	\address{Laura De Carli, Department of Mathematics, Florida International University,   Miami, FL 33199, USA.}
	\email{decarlil@fiu.edu}

	\author{M.   Laporta}
	\address{Maurizio Laporta, Dipartimento di Matematica e Applicazioni, Università degli Studi di Napoli  "Federico II", 80126 Napoli, Italy}
	\email{mlaporta@unina.it }
	
 %\keywords{cyclotomic polynomials,   homogeneous symmetric polynomials,  roots of unity}
	
	\subjclass 
	{12D10, %: Polynomials: location of zeros (algebraic theorems)
		05E05, %: Symmetric functions and polynomials.
		11C08. %: Polynomials, location of zeros (Number theory context)}
}

\maketitle

\begin{abstract}
	We prove some new identities for homogeneous  symmetric  polynomials  when   evaluated at  roots of  unity. 
Some of these formulas are applied to obtain new insights onto  cyclotomic polynomials. 
\end{abstract}

\section{Introduction}\label{introduction}

 The word cyclotomy means literally “circle-cutting” and historically refers to the problem of dividing the unit circle into a given number of arcs of equal lengths. 
 This problem was studied more than two thousand years ago by the Greek geometers and, following the work of Gauss, evolved into the modern theory of cyclotomic fields.  
 Generally cyclotomy  is associated to the theory of the complex roots of unity, i.e., the complex roots of  polynomials $x^n - 1$ %in the complex field $\C$  
 for any $n\in\N:=\{1,2,\ldots\}$ \cite[Ch.\ 2]{D}. 
 
Let us denote the set of $n$th roots of unity by
 $$
 {\cal Z}_n: =\{1, \zeta_n,\ldots,\zeta_n^{n-1}\},
 $$
 where
 $\zeta_n :=e^{2\pi i /n}$, ($i^2=-1$).
 By cyclotomy of a function we mean the study of its values   at elements of ${\cal Z}_n$. A key role is played by the cyclotomic polynomials defined as follows.  
 
 The cyclotomic polynomial of  index $n\in\N$  is 
 $$
 \Phi_n(z):=\prod_{\zeta_n^j\in{\cal Z}_n^*} (z-\zeta_n^j),
 $$  
 where ${\cal Z}_n^{* }:=\{\zeta_n^j\in {\cal Z}_n:\ {\rm gcd}(j,n)=1\}$ is the set of the primitive $n$th roots of unity.

 The study of  the  roots of unity naturally reduces to the study of cyclotomic polynomials as  highlighted by the well known formula 
 $$ 
 z^n-1=\prod_{d\vert n} \Phi_d(z).
 $$
 Also,  many questions  concerning  the  vanishing of a polynomial at roots of unity can be viewed  as divisibility questions involving cyclotomic polynomials.
 
 %Recall that $\Phi_d(x)$ is the minimal polynomial over $\mathbb{Q}$ of a primitive $d$th root of unity.

 \medskip
 In this paper we continue our investigation of the cyclotomy of the 
 homogeneous symmetric polynomials  initiated in \cite{DEL,DL}, and give some applications of these results to cyclotomic polynomials.

Let us recall that the  elementary  symmetric polynomials of degree  $m\in\N_0:=\N\cup\{0\}$ in the complex variables  $z_1,\ldots, z_n$  are defined by 
$$
{\cal E}_m(z_1,\ldots, z_n):=
	\begin{cases} 1&  \mbox{if  $m=0$,} 
		\cr  \dsize
		\sum_{1\le j_1<j_2<\ldots< j_m\le n}z_{j_1}\cdots z_{j_m}& \mbox{if  $1\le m \leq n$,}\cr
		0&   \mbox{if  $m >n$.}\cr
	\end{cases} 
$$
 The complete homogeneous symmetric  polynomials  of 
degree $m\in\N_0 $  are defined by  ${\cal H}_0(z_1,\ldots,z_n):=1$, and for any $m\in\N$ by 
$$
	 {\cal H}_m (z_1,\ldots,z_n):=  \dsize\sum_{1\le j_1\leq j_2\leq \ldots\leq j_m\le n}z_{j_1} z_{j_2} \cdots z_{j_m}.
$$
In particular, for $n=1$ this reduces to ${\cal H}_{m}(z)=z^m$.  

We refer the reader to \cite{Mcdonald, St} for basic properties of the symmetric polynomials. Here we recall that
 $ {\cal E}_m$ and ${\cal H}_m$  are homogeneous  and invariant under permutation of the variables. We  use  these properties throughout the paper without further mention. 
 In view of such an invariance property, accordingly we   write $\E_m(\{z_1,\ldots,z_n\}):=\E_m(z_1,\ldots,z_n)=\E_m(\vec z\,)$ and
 $\H_m(\{z_1,\ldots,z_n\}):=\H_m(z_1,\ldots,z_n)=\H_m(\vec z\,)$, where 
 $\{z_1,\ldots,z_n\}$ has to be interpreted in  general as the so-called multiset of the components of $ 
 \vec z:=(z_1,\ldots,z_n)$. Clearly, since such components are not necessarily distinct, an element $z_j$ might occur more than once  in the multiset $\{z_1,\ldots,z_n\}$. 
 See \cite{St} for an introduction to the notion of multiset. Note that  the components of $\vec \zeta_n: =(1, \zeta_n,\ldots,\zeta_n^{n-1})$ form the set ${\cal Z}_n$, where there is no repeated element because the $n$th roots of unity are   distinct.

  Further, we adopt the convention 
 \begin{equation}\label{convention1}
 	\H_m(\emptyset)=\E_m(\emptyset):=\begin{cases} 1&  \mbox{if  $m=0$, } 
 		\cr  0&   \mbox{otherwise. }\cr\end{cases} 
 \end{equation}

It is well known  that the coefficients of a complex polynomial can be expressed in terms of its roots by means of 
the polynomials $\E_m$.  More precisely, for
\begin{equation}\label{polynomial}
\cal P(z):=a_0\prod_{j=1}^m(z-z_j)
=\sum_{k=0}^ma_kz^{m-k}\quad (a_k\in\C, a_0\not=0),
\end{equation}
one has the classical identities
\begin{equation}\label{e-classVieta}
a_k/a_0=(-1)^k{\cal E}_k(z_1,\ldots,z_m),
\qquad
k=0,1,\ldots,m,
\end{equation}
which  
are also known as  Vieta formulas. See e.g. \cite[Ex. 4.6.6]{B}. In particular, \eqref{e-classVieta} for $z^n-1$ yields
\begin{equation}\label{e-classVietaz^n-1}
{\cal E}_k(\vec\zeta_n)={\cal E}_k({\cal Z}_n)={\cal E}_k(1, \zeta_n,\ldots,\zeta_n^{n-1})=\begin{cases} 1&  \mbox{if  $k=0$, } 
	\cr (-1)^{n+1}&  \mbox{if  $k=n$, } 
	\cr  0&   \mbox{otherwise. }\cr\end{cases} 
\end{equation}
On the other hand, from \cite[Cor.\ 1.5]{DEL} we infer
\begin{equation}\label{1e-spec-case}
	{\cal H}_m(\vec\zeta_n)=\H_m({\cal Z}_n)=\H_m(1, \zeta_n,\ldots,\zeta_n^{n-1})
	=\begin{cases} 1&  \mbox{if  $n$ divides $m$,} 
		\cr  0&   \mbox{otherwise. }\cr\end{cases} 
\end{equation}
See also Lemma \ref{T-cyclic-1} and Remark \ref{generalT-cyclic-1} below for a more general property.
\smallskip

Our main result is the following general identity  relating the evaluations of complete  and elementary   symmetric polynomials at elements of ${\cal Z}_n$. The proof is given in \S \ref{Ths}.
  
 \begin{Thm}\label{T1-main} Let $m \in\N_0$. For every nonempty $J\subseteq {\cal Z}_n$ one has
 	\begin{equation}\label{1main}
 		\H_m(J)= (-1)^{\{m\}_n}\E_{\{m\}_n}({\cal Z}_n\setminus J),
 	\end{equation}
 	where  $\{m\}_n$  denotes the remainder of the division of $m$ by $n\in\N$.
 \end{Thm}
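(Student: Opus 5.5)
Let $K:={\cal Z}_n\setminus J$, with $|J|=r\ge 1$ and $|K|=n-r$. The plan is to compare generating functions. Recall the standard identities
$$
\sum_{m\ge 0}\H_m(J)\,t^m=\prod_{w\in J}\frac{1}{1-wt},\qquad \sum_{k\ge 0}(-1)^k\E_k(K)\,t^k=\prod_{w\in K}(1-wt),
$$
valid as formal power series; the second is a polynomial of degree $n-r\le n-1$, and it reduces to $1$ when $K=\emptyset$, consistently with \eqref{convention1}. Since ${\cal Z}_n$ is the set of roots of $z^n-1$, we have $\prod_{w\in{\cal Z}_n}(1-wt)=1-t^n$. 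This is equivalent to \eqref{e-classVietaz^n-1}, or can be seen by substituting $z=1/t$. It follows that
$$
\prod_{w\in J}\frac{1}{1-wt}=\frac{\prod_{w\in K}(1-wt)}{1-t^n}=\Bigl(\sum_{k=0}^{n-r}(-1)^k\E_k(K)\,t^k\Bigr)\sum_{q\ge 0}t^{qn}.
$$

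Next I extract the coefficient of $t^m$ on the right. Write $m=qn+\{m\}_n$ with $0\le \{m\}_n\le n-1$. The coefficient of $t^m$ is $\sum_{k\equiv m \ (\mathrm{mod}\ n),\ 0\le k\le n-r}(-1)^k\E_k(K)$. Since $0\le k\le n-r\le n-1$, the only admissible value is $k=\{m\}_n$. If $\{m\}_n>n-r$, then $\E_{\{m\}_n}(K)=0$ by definition, and the coefficient is $0$. In every case we obtain $\H_m(J)=(-1)^{\{m\}_n}\E_{\{m\}_n}(K)$, which is \eqref{1main}.

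The only step needing care is the hypothesis that $J$ is nonempty. It is exactly what guarantees $\deg\prod_{w\in K}(1-wt)\le n-1$, so that no two indices $k$ in the convolution share a residue mod $n$. For $J=\emptyset$ the identity indeed fails at $m=n$, because the term $k=n$ would also contribute. I would also check briefly that the formal power series manipulations are legitimate: $1-t^n$ is invertible in $\C[[t]]$, so this is routine. As a sanity check, the case $J={\cal Z}_n$ recovers \eqref{1e-spec-case}.
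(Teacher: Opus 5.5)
Your proof is correct and follows essentially the paper's argument: both write the generating function of $\H_m(J)$ as that of $\H_m(\mathcal{Z}_n)$ times $\prod_{w\in\mathcal{Z}_n\setminus J}(1-wt)$, then use $|\mathcal{Z}_n\setminus J|<n$ to isolate the single index $k=\{m\}_n$. The only difference is cosmetic: you get $\sum_{m}\H_m(\mathcal{Z}_n)t^m=1/(1-t^n)$ directly from $\prod_{w\in\mathcal{Z}_n}(1-wt)=1-t^n$ and read off coefficients, whereas the paper cites \eqref{1e-spec-case} and expands with the Leibniz rule, so your version is slightly more self-contained and covers $n=1$ without a separate case.
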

 In view of  \eqref{convention1}, the identity \eqref{1e-spec-case} is recovered by   taking $J={\cal Z}_n$
 in \eqref{1main}. Further,   observe that \eqref{1main} is false 
  when $J=\emptyset$ and $n$ divides $m\not=0$ because of  \eqref{convention1} and \eqref{e-classVietaz^n-1}, whereas the identity 
 \begin{equation}\label{1mainempty}
 	\H_{\{m\}_n}(J)= (-1)^{\{m\}_n}\E_{\{m\}_n}({\cal Z}_n\setminus J),
 \end{equation}
 holds also for $J=\emptyset$ and any $m\in\N_0$. Moreover, 
 a noteworthy implication of Theorem \ref{T1-main} is that
 \begin{equation}\label{periodic}
 \H_m(J) = \H_{\{m\}_n}(J) \ \hbox{for all}\ m\in\N_0\ \hbox{and all}\ J\subseteq{\cal Z}_n,\ \hbox{with}\ J\not=\emptyset. 
 \end{equation}

 The coefficients of the cyclotomic polynomial $\Phi_n(z)$  are  as in \eqref{e-classVieta}  with  $ \{z_1, \ldots, \, z_m\}={\cal Z}_n^{* } $ . 
 Analogously,
 the coefficients of the so-called inverse (or reciprocal) cyclotomic polynomial  
$$ 
\Psi_n(z)=\prod_{\zeta_n^j\in{\cal Z}_n^{**}} (z-\zeta_n^j)=\frac{z^n-1}{\Phi_n(z)} \ %
$$  
are as in \eqref{e-classVieta}  with  $ \{z_1, \ldots, \, z_m\}={\cal Z}_n^{** }:=	{\cal Z}_n\setminus	{\cal Z}_n^*$.

Thus, Theorem \ref{T1-main} provides   expressions for the coefficients of $\Phi_n(z)$ and  $\Psi_n(z)$ in terms of the complete homogeneous symmetric polynomials. More precisely,
writing  
  \begin{equation}\label{defcyclos}
  \Phi_n(z)=\sum _{j=0}^{\varphi(n)}a_n(j)z^{\varphi(n)-j},\quad \Psi_n(z)=\sum _{j=0}^{n-\varphi(n)}b_n(j)z^{n-\varphi(n)-j},
  \end{equation}
  where $\varphi(n)  = |{\cal Z}_n^*|= |\{j\in\N:\, 1\le j\le n\, \hbox{and}\, {\rm gcd}(j,n)=1\}|$ is the Euler totient function, the identities \eqref{e-classVieta}  and \eqref{1main} yield
\begin{align}\label{P-coeff-rec-pol0}
 		a_n(j) &= (-1)^j \E_j({\cal Z}_n^{* })=\H_j({\cal Z}_n^{* *}),\quad   j=0,1,\ldots,\varphi(n),
 		\\
 		\label{P-coeff-rec-pol}
 		b_n(j) &=(-1)^j \E_j({\cal Z}_n^{* *})=\H_j({\cal Z}_n^{* }),\   j=0,1,\ldots, n-\varphi(n).
 	\end{align}
Both   identities were already established in 
\cite[Cor.\ 1.2]{DEL} as a consequence of our generalization of \eqref{e-classVieta} (see Proposition \ref{T-Viete-3} below).  

Since for $n\ge 2$ the  polynomial $\Phi_n(z)$ is palindromic and  $\Psi_n(z)$ is anti-palindromic, i.e., 

\centerline{$a_n(j)  =a_n(\varphi(n)-j)$\quad and\quad $b_n(j) =-b_n(n-\varphi(n)-j) $,}  

\noindent
the formulas \eqref{P-coeff-rec-pol0} and \eqref{P-coeff-rec-pol} for $n\ge 2$ yield 
	\begin{align*}
		\H_j({\cal Z}_n^{* }) &=-\H_{n-\varphi(n)-j}({\cal Z}_n^{* }), &j=0,1,\ldots, n-\varphi(n),
		\\
		\E_j({\cal Z}_n^{* })&= \E_{ \varphi(n)-j}({\cal Z}_n^{* }),   &j=0,1,\ldots,\varphi(n).
	\end{align*}
	Note that the latter is trivially true for $n=2$, whereas
	we use the fact that  $\varphi(n)$ is even when $n\ge 3$.
\smallskip

The paper is  organized as follows.
 In this section, we apply Theorem \ref{T1-main} to derive new cyclotomic identities for the symmetric polynomials  $\H_m$ and $\E_m$, as well as new properties of the cyclotomic polynomials $\Phi_n $ and $\Psi_n $.
The proofs of our results are collected in \S\ref{Ths}. In \S\ref{preliminaries} we recall some properties of
$\H_m$ and $\E_m$, together with a generalization of the Vieta formulas \eqref{e-classVieta} that we have established in \cite[Th.\ 1.1]{DEL}.

 \subsection{Cyclotomy of  the complete homogeneous symmetric polynomials}\label{cyclotomy}
In this sub-section we state   new results  on the cyclotomy   of  the complete homogeneous symmetric polynomials $\H_m$ that  complement the results proved  in \cite{DEL, DL}.

 First, as a straightforward consequence  of Theorem \ref{T1-main}, we see that if $n\in\N$, with $n\ge 2$, and
		$m\in\N_0$, then \begin{align}\label{1e-spec-case-1}
		\H_m({\cal Z}_n\setminus\{1\})& =\H_m(\zeta_n, \zeta_n^2, \ldots, \zeta_n^{n-1})\nonumber\\ &=\H_{\{m\}_n}({\cal Z}_n\setminus\{1\})=\begin{cases} (-1)^{\{m\}_n}&  \mbox{if  $\{m\}_n=0, 1$,} 
			\cr  0&   \mbox{otherwise. }\cr\end{cases}
		\end{align}
		
 In the next result we consider polynomials with coefficients of the form $\H_m(J)$ for a nonempty  $J\subset {\cal Z}_n$. Besides 
 showing some properties of these polynomials, we deduce  some  cyclotomic properties of $\H_m$. To this end, for any given $A\subseteq\C$ let us denote $\ov A:=\{\overline x:\ x\in A\}$, where $\ov x$ is the  complex conjugate of $x$. 
	  \begin{Cor}\label{C2}   Given $n\in\N$, with $n\ge 2$, let  us consider  
	  	$$
	  	\cal Q(z):=\sum_{j=0}^{n-1 }\H_j(J) z^{  j},
	  	$$ 
	  	where
	  	$J\subset{\cal Z}_n$,  with  $s:=|J|\in (0, n)$.  
	   Then 
	   \begin{itemize}
	   \item[(1)]  ${\rm deg}
	  		 \cal Q(z) =n-s$. 
	  		    
	  	\item[(2)] $\{x\in\C: \cal Q(x)=0\}=\ov{{\cal Z}_n\setminus J}=\{\overline x\in\C:\ x\in {\cal Z}_n\setminus J\}$. 
	  		\end{itemize}	  	
  		\end{Cor}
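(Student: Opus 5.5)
The plan is to use Theorem \ref{T1-main} to rewrite each coefficient of $\cal Q$ through an elementary symmetric polynomial of the complement $K:={\cal Z}_n\setminus J$, which has $|K|=n-s\in(0,n)$. For $0\le j\le n-1$ we have $\{j\}_n=j$, and $J\neq\emptyset$, so Theorem \ref{T1-main} gives
$$
\H_j(J)=(-1)^j\E_j(K),\qquad j=0,1,\ldots,n-1 .
$$
Hence
$$
\cal Q(z)=\sum_{j=0}^{n-1}(-1)^j\E_j(K)z^j .
$$
Since $\E_j(K)=0$ for $j>n-s$, the sum is really over $0\le j\le n-s$, and $n-s\le n-1$.

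For part (1), the top coefficient is $(-1)^{n-s}\E_{n-s}(K)$, which is $\pm$ the product of the elements of $K$. This product is nonzero because every element of $K$ is a root of unity, so $\deg\cal Q=n-s$.

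For part (2), we identify $\cal Q$ with a reversed Vieta polynomial. Write $K=\{w_1,\ldots,w_{n-s}\}$. By the Vieta formulas \eqref{e-classVieta},
$$
\prod_{k}(1-w_kz)=\sum_{j}(-1)^j\E_j(K)z^j .
$$
One can check this directly, or apply \eqref{e-classVieta} to $\prod(z-w_k)$ and substitute $z\mapsto 1/z$, multiplying by $z^{n-s}$. It follows that
$$
\cal Q(z)=\prod_{w\in K}(1-wz).
$$
The zeros are therefore exactly $w^{-1}=\ov w$ for $w\in K$, using $|w|=1$, which gives the set $\ov{{\cal Z}_n\setminus J}$.

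The only delicate points are checking that the hypotheses of Theorem \ref{T1-main} hold, namely $J$ nonempty and the remainder $\{j\}_n$ equal to $j$ in the range $0\le j\le n-1$, and getting the sign bookkeeping in the reversed Vieta identity right. Neither step poses a real obstacle.
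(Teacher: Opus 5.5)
Your proposal is correct and follows essentially the same route as the paper. You use Theorem \ref{T1-main} to write the coefficients as $(-1)^j\E_j({\cal Z}_n\setminus J)$, read off the nonzero top coefficient, and locate the zeros via the Vieta formulas. The only difference is cosmetic: you factor $\mathcal Q(z)=\prod_{w\in{\cal Z}_n\setminus J}(1-wz)$ directly, whereas the paper first notes $\mathcal Q(0)=1$ and then works with the reversed polynomial $x^{n-s}\mathcal Q(x^{-1})=\prod(x-\zeta_n^r)$, so your version avoids the separate check at $0$.
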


	  In what follows, we denote   $ 
	  \H_m(I,J)=\H_m(I\cup J)$ for any given finite  multiset $I,J\subseteq\C$, where $I\cup J:=\{y_1,\ldots,y_r\}$ has to be interpreted as 
	  a multiset, as already said.
	  	 In this context, 
	  	$|I\cup J|:=r$ is the cardinality of  the multiset $I\cup J$. In particular, for every $m\in\N_0$ we have that
	  	$$
	  	\H_m(J,0,\ldots, 0)=\H_m(J\cup\{0,\ldots,0\})=\H_m(J).
	  	$$ 
	  	
	  	\begin{Cor}\label{C3} Given $n\in\N$, with $n\ge 2$, let $J\subset{\cal Z}_n$ be such that $s:=|J|\in (0, n)$.  The following properties hold true. 
	  		\begin{itemize}
	  			\item[(1)] 
	  	For every  $m\in \{n-s,\ldots, n-1\}$ and every $x\in \C\setminus\{0\}$ we have that
	  		$\H_m(J,x)=0$  if and only if  $\ov x\in \cal Z_n\setminus J $.

	  	\item[(2)]  If $I \subseteq {\cal Z}_n\setminus J$, with $r:=|I|\not=0$, then  
	  		$ 
	  		\H_m(\overline I,J)=0$ for every $m\in \{n-r-s+1,\ldots, n-1\}$.
	  			\end{itemize}
	  	 \end{Cor}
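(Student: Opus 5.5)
The plan is to reduce everything to Theorem~\ref{T1-main} through generating functions, and then treat the conjugation in the statement as the step that has to be checked separately. Put $K:={\cal Z}_n\setminus J$, so that $|K|=n-s$. For $0\le j\le n-1$ we have $\{j\}_n=j$, so \eqref{1main} gives $\H_j(J)=(-1)^j\E_j(K)$. This vanishes for $n-s<j\le n-1$. In generating-function form this says
$$\sum_{j=0}^{n-1}\H_j(J)t^j=\prod_{y\in K}(1-yt).$$
This is also the content of Corollary~\ref{C2}, whose zero set is $\ov K$.

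\textbf{Step 1 (part (1)).} Use the standard splitting $\H_m(J,x)=\sum_{k=0}^m x^k\H_{m-k}(J)$. For $m\in\{n-s,\dots,n-1\}$ every index $m-k$ lies in $[0,n-1]$. All terms with $m-k>n-s$ vanish. By the Vieta formulas \eqref{e-classVieta} the sum collapses to
$$\H_m(J,x)=x^{m-n+s}\sum_{j=0}^{n-s}(-1)^j\E_j(K)\,x^{n-s-j}=x^{m-n+s}\prod_{y\in K}(x-y).$$
Since $x\ne0$, the vanishing of $\H_m(J,x)$ is therefore equivalent to the vanishing of $\prod_{y\in K}(x-y)$.

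\textbf{Step 2 (part (2)).} Multiply generating functions:
$$\sum_m \H_m(\ov I,J)t^m=\prod_{a\in I}(1-\ov a t)^{-1}\Big(\prod_{y\in K}(1-yt)+O(t^n)\Big).$$
Each factor $1-yt$ whose root $1/y=\ov y$ pairs with one of the reciprocal factors $(1-\ov a t)^{-1}$ cancels it. This leaves a polynomial of degree $n-s-r$ plus $O(t^n)$. Hence all coefficients in degrees $n-r-s+1,\dots,n-1$ vanish.

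\textbf{Main obstacle.} The hard part is matching these reductions with the conjugates exactly as written in the statement. In Step 1 the factorization makes $\H_m(J,x)$ vanish precisely when $x\in K$. The stated condition $\ov x\in K$ is the zero set of $\cal Q$ from Corollary~\ref{C2}, and for $|x|=1$ the two conditions are related by $x\mapsto 1/x=\ov x$. Similarly, in Step 2 the cancellation $(1-yt)/(1-\ov a t)$ requires $\ov a=y\in K$, which is the same as $a\in\ov K$.

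I would first verify both reductions on the smallest case $n=3$, $J=\{1\}$, to settle whether the conjugates are consistent with them. If $J$ is closed under conjugation, then $K=\ov K$ and both statements follow verbatim from Steps 1--2. In general, the test case decides whether the formula needs $\ov I$ or $I$, and $\ov x$ or $x$. I expect it to show that the unconjugated forms are the ones these reductions yield. I cannot give an argument for the literal conjugated wording beyond the conjugation-symmetric case.
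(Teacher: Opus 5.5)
Your doubt about the conjugates is justified, and it should become a definite conclusion rather than a hedge. The statement as written is false whenever $J\neq\overline J$ (equivalently $K\neq\overline K$), so no argument can prove the literal wording. Your proposed test case $n=3$, $J=\{1\}$ cannot decide this, because there $K=\{\zeta_3,\zeta_3^2\}=\overline K$.

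Take instead $n=3$ and $J=\{\zeta_3\}$, so that $s=1$, $K=\{1,\zeta_3^2\}$ and $m=2$. Then $\H_2(\zeta_3,x)=x^2+\zeta_3x+\zeta_3^2$, and:
\begin{itemize}
\item $\H_2(\zeta_3,\zeta_3)=3\zeta_3^2\neq0$, although $\overline{\zeta_3}=\zeta_3^2\in K$;
\item $\H_2(\zeta_3,\zeta_3^2)=1+\zeta_3+\zeta_3^2=0$, although $\overline{\zeta_3^2}=\zeta_3\notin K$.
\end{itemize}
With $I=\{\zeta_3^2\}\subseteq K$ (so $r=1$ and again $m=2$), the same computation refutes (2), since $\H_2(\overline I,J)=\H_2(\zeta_3,\zeta_3)\neq0$.

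Your Step 1 is exactly the paper's proof of (1). The paper also reaches $\H_k(J,x)=x^k\mathcal Q(x^{-1})=x^{k-n+s}\prod_{\zeta\in K}(x-\zeta)$ via \eqref{1-genpol}. It then states the conclusion with $\overline x$, which does not follow from that factorization. The error is a conflation of two zero sets: the zero set of $\mathcal Q$ in Corollary \ref{C2} is $\overline K$, but the nonzero zero set of $x\mapsto x^k\mathcal Q(x^{-1})$ is $K$. The paper proves (2) by induction on $r$ with the flawed (1) as its base case, so it inherits the error.

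What your Steps 1--2 actually establish is the corrected statement:
\begin{itemize}
\item for $x\neq0$ and $n-s\le m\le n-1$, one has $\H_m(J,x)=0$ if and only if $x\in K$;
\item $\H_m(I,J)=0$ for $I\subseteq K$ and $n-r-s<m<n$.
\end{itemize}
For the second claim there is a shorter route than cancelling factors. The set $I\cup J\subseteq{\cal Z}_n$ is nonempty of size $r+s$, so Theorem \ref{T1-main} gives $\H_m(I\cup J)=(-1)^m\E_m({\cal Z}_n\setminus(I\cup J))$ for $m<n$. This vanishes as soon as $m>n-r-s$.

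Finally, fix the presentation of your Step 2. It first asserts the vanishing for $\overline I$, and only later concedes that the cancellation of $(1-\overline a t)^{-1}$ needs $\overline a\in K$. State and prove it for $I$ from the start.
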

	  \begin{Rem} From (1) it follows that the condition $\H_m(J,x)=0$, with $n-|J|\le m <n$ and $x\not=0$, yields   $x\in \cal Z_n$. Moreover, since \eqref{1main} implies that  $\H_m(J,0)=\H_m(J)=0$ when $n-s<m<n$, from the proof of (1) it is readily seen that
	  	$\H_m(J,x)=0$  if and only if  $\ov x\in \{0\}\cup\cal Z_n\setminus J$ 
	  	when  $m\in \{n-s+1,\ldots, n-1\}$. 
	  	\end{Rem}

	  \medskip

\subsection{Cyclotomy of the elementary  symmetric polynomials}

In this section we present new ciclotomic  identities for  the elementary    symmetric polynomials $\E_m$.

 \begin{Cor}\label{1P-Viete-1}
 	Let $k,n\in\N $ and $m\in\N_0$ such that 
 	 $m\leq k< n $. We have that
 		\begin{equation}\label{T-cyclic-1E}
 			\E_m (1, \zeta_n,\ldots, \zeta_n^{k-1})
 			=  \begin{cases}  \dsize(-1)^m \zeta_n^{mk}\prod_{j=1}^{n-k-1 }\frac{1-\zeta_n^{m +j }}{1-\zeta_n^j }& \mbox{if $k\leq n-2$,}\cr (-1)^m \zeta_n^{-m}&\mbox{if $k=n-1$.}
 				\end{cases}
 		\end{equation}
 	\end{Cor}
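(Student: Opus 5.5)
The plan is to reduce everything to Theorem \ref{T1-main} applied to the complementary arc of roots of unity, and then to evaluate a complete homogeneous symmetric polynomial at a geometric progression.

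\emph{Reduction via Theorem \ref{T1-main}.} Set $J:=\{\zeta_n^k,\zeta_n^{k+1},\ldots,\zeta_n^{n-1}\}\subseteq{\cal Z}_n$. Since $k<n$, this set is nonempty with $|J|=n-k\ge 1$, and ${\cal Z}_n\setminus J=\{1,\zeta_n,\ldots,\zeta_n^{k-1}\}$. Because $m\le k<n$, we have $\{m\}_n=m$. Hence \eqref{1main} gives
$$\E_m(1,\zeta_n,\ldots,\zeta_n^{k-1})=(-1)^m\H_m(\zeta_n^k,\ldots,\zeta_n^{n-1}).$$
By homogeneity of degree $m$ we can factor out $\zeta_n^k$ from every variable, which yields
$$\H_m(\zeta_n^k,\ldots,\zeta_n^{n-1})=\zeta_n^{mk}\,\H_m(1,\zeta_n,\ldots,\zeta_n^{N-1}),\qquad N:=n-k.$$

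\emph{The case $k=n-1$.} Here $N=1$ and $\H_m(1)=1$. The right side is therefore $(-1)^m\zeta_n^{m(n-1)}=(-1)^m\zeta_n^{-m}$, as claimed.

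\emph{The case $k\le n-2$.} Now $N\ge 2$, and it remains to prove the $q$-binomial evaluation
$$\H_m(1,q,\ldots,q^{N-1})=\prod_{j=1}^{N-1}\frac{1-q^{m+j}}{1-q^j}$$
as an identity of rational functions in an indeterminate $q$. I would prove it by induction on $N$. The base case $N=1$ is trivial (empty product). For the inductive step I would use the recursion obtained by splitting monomials according to whether they involve the variable $1$:
$$\H_m(1,q,\ldots,q^{N-1})=\H_{m-1}(1,q,\ldots,q^{N-1})+q^m\,\H_m(1,q,\ldots,q^{N-2}).$$
A double induction on $N$ and $m$ then reduces the claim to the Pascal-type identity for Gaussian binomials $\left[{m+N-1\atop m}\right]_q$. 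Equivalently, one can use the generating function $\prod_{i=0}^{N-1}(1-q^it)^{-1}=\sum_m \H_m(1,\ldots,q^{N-1})t^m$ together with the functional equation obtained by replacing $t$ with $qt$.

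\emph{Specialization and main obstacle.} Once the identity holds with $q$ an indeterminate, we specialize $q=\zeta_n$. This is legitimate because the denominators $1-\zeta_n^j$ with $1\le j\le N-1=n-k-1<n$ are nonzero. Substituting back gives \eqref{T-cyclic-1E}. The only genuinely nontrivial step is this $q$-binomial evaluation of $\H_m$ at a geometric progression. It is classical, so the main care needed is to carry out the inductive verification cleanly and to check that no denominator vanishes upon specialization.
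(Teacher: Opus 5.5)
Your proposal is correct and follows the paper's proof essentially step for step. You use the same application of Theorem \ref{T1-main} to the complementary arc $J=\{\zeta_n^k,\ldots,\zeta_n^{n-1}\}$, the same factoring of $\zeta_n^{mk}$ by homogeneity, and the same split into the cases $k=n-1$ and $k\le n-2$. The only difference is in the case $k\le n-2$: the paper cites Lemma \ref{T-cyclic-1} (imported from its earlier work), whereas you prove the needed evaluation yourself via the Gaussian-binomial identity $\H_m(1,q,\ldots,q^{N-1})=\prod_{j=1}^{N-1}(1-q^{m+j})/(1-q^j)$. You then specialize at $q=\zeta_n$, which is legitimate because $1\le j\le n-k-1<n$, so your argument is self-contained.
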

   We have only considered the case $m\leq k$ because   $	\E_m ( 1, \zeta_n,\ldots, \zeta_n^{k -1}) =0$  when $m>k$. The case $k=n$  is given by \eqref{e-classVietaz^n-1}. 
 \medskip

The next identity  is the dual version of \eqref{1main} where the roles of $\E_m$ and $\H_m$ are exchanged. However, unlike  \eqref{1main}, it holds also when $J=\emptyset$.

\begin{Cor}\label{C4}
		  For every $m\in\N_0$ and every
	$J\subseteq {\cal Z}_n$  one has 
	\begin{equation}\label{eqC4}\hskip -.3 cm
		\E_m(J)=\begin{cases} (-1)^ {m}\H_{m}({\cal Z}_n\setminus J)+(-1)^ {m-1}\H_{m-n}({\cal Z}_n\setminus J)&  \mbox{if  $n\le m$ } 
			\cr  (-1)^ {m}\H_{m}({\cal Z}_n\setminus J)&   \mbox{otherwise. }\cr\end{cases} 
\end{equation}
\end{Cor}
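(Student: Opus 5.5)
The plan is to prove Corollary \ref{C4} with generating functions and the factorization of $z^n-1$, rather than by inverting Theorem \ref{T1-main} directly. Set $K:={\cal Z}_n\setminus J$. Everything rests on the standard generating functions
$$
\sum_{m\ge0}\E_m(J)t^m=\prod_{x\in J}(1+xt),\qquad \sum_{m\ge0}\H_m(K)t^m=\prod_{y\in K}\frac{1}{1-yt},
$$
where the empty product equals $1$. These are consistent with the convention \eqref{convention1}, so the cases $J=\emptyset$ and $J={\cal Z}_n$ need no separate treatment. We also use the identity $\prod_{x\in{\cal Z}_n}(1-xt)=1-t^n$, which is the reciprocal form of $z^n-1=\prod_{x\in{\cal Z}_n}(z-x)$ (equivalently \eqref{e-classVietaz^n-1}).

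The first step is to replace $t$ by $-t$ in the first generating function. This gives $\sum_m(-1)^m\E_m(J)t^m=\prod_{x\in J}(1-xt)$. Since ${\cal Z}_n$ is the disjoint union of $J$ and $K$, we can write
$$
\prod_{x\in J}(1-xt)=\frac{1-t^n}{\prod_{y\in K}(1-yt)}=(1-t^n)\sum_{m\ge0}\H_m(K)t^m,
$$
as an identity of formal power series.

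The second step is to compare the coefficients of $t^m$ on both sides. This gives $(-1)^m\E_m(J)=\H_m(K)-\H_{m-n}(K)$ when $m\ge n$, and $(-1)^m\E_m(J)=\H_m(K)$ when $m<n$. Multiplying by $(-1)^m$ gives exactly \eqref{eqC4}, since $-(-1)^m=(-1)^{m-1}$.

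The only point that needs care is the bookkeeping of edge cases, which I take to be the main (mild) obstacle. First, when $K=\emptyset$, the product over $K$ is $1$ and $\H_m(\emptyset)$ agrees with \eqref{convention1}; the formula then correctly returns $\E_n({\cal Z}_n)=(-1)^{n+1}$, consistent with \eqref{e-classVietaz^n-1}. Second, for $m>|J|$ both sides must vanish, and this follows automatically from the identity of series. No appeal to Theorem \ref{T1-main} is needed, although one could alternatively derive the result from it by writing $\H_{m}(K)$ via \eqref{1main} and using \eqref{periodic}.
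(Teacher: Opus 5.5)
Your proof is correct, and it takes a more direct route than the paper. The paper derives Corollary \ref{C4} from Theorem \ref{T1-main}. For $m<n$ it invokes \eqref{1mainempty} with $J$ replaced by ${\cal Z}_n\setminus J$. For $m\ge n$ it runs a case analysis: $m>n$ versus $m=n$, each split according to whether $J={\cal Z}_n$. The nonempty-complement cases use the periodicity \eqref{periodic}, and the degenerate ones use the convention \eqref{convention1}.

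Your single identity $\prod_{x\in J}(1-xt)=(1-t^n)\sum_{m\ge0}\H_m({\cal Z}_n\setminus J)t^m$ removes all of this casework. It needs nothing beyond the factorization of $z^n-1$, so neither Theorem \ref{T1-main} nor \eqref{1e-spec-case} (which the paper imports from earlier work) is required. It also shows why \eqref{eqC4}, unlike \eqref{1main}, survives at $J=\emptyset$: you only multiply by the polynomial $1-t^n$ rather than expanding $1/(1-t^n)$, so no reduction of indices modulo $n$ occurs.

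Your identity is in fact the factorization the paper uses to prove Theorem \ref{T1-main}, with $J$ and its complement interchanged. Its coefficients give more than the corollary. For $m\ge n$ and $J\ne{\cal Z}_n$ we have $\E_m(J)=0$, so you get $\H_m({\cal Z}_n\setminus J)=\H_{m-n}({\cal Z}_n\setminus J)$, which is \eqref{periodic}. For $m<n$ you get \eqref{1mainempty}. So your computation recovers Theorem \ref{T1-main} as a by-product instead of consuming it. The paper's route gains mainly in exposition: it presents \eqref{eqC4} explicitly as the dual of \eqref{1main}, assembled from results already on the page.
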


\subsection{Cyclotomy involving Kronecker products}\label{kronecker}

\hskip 3 mm The Kronecker product of $
 \vec x=(x_1,\ldots,x_n)\in\C^n$ and $ \vec y=(y_1,\ldots,y_k) \in \C^k$ is 
 $$
 \vec x\otimes \vec y=(x_1\vec y,\ldots,  x_n\vec y) \in \C^{nk},
 $$
 where $x_j\vec y=(x_jy_1,\ldots ,x_jy_k)$ (see \cite{G}).  

Although the Kronecker product  is not commutative,  we have that 
 $
\H_m(\vec x\otimes\vec y)=\H_m(\vec y\otimes\vec x)$ and
$\E_m(\vec x\otimes\vec y)=\E_m(\vec y\otimes\vec x)$ for the vectors   $\vec x\otimes\vec y$ and $\vec y\otimes\vec x$   have the same multiset of components.

 For this reason, in what follows we often identify vectors with the same multiset of components. 
 
 In this section we  consider  Kronecker products involving roots of unity. More precisely, 
the next theorem concerns   the evaluations of $\H_m$ and $\E_m$ at 
$\vec\xi\otimes\vec\zeta_n$, where $\vec\xi=(\xi_0,\ldots,\xi_{k-1})$ is any given vector and $ 
	\vec\zeta_n:=(1,\zeta_n,\ldots,\zeta_n^{ n-1})
	$ is 
the vector of the $n$th roots of unity. We introduce the following

{\sl Notation.} We denote  $\vec \xi^{\,r}
:=(\xi_0^r,\ldots,\xi_{k-1}^r)$ when $r\in\N_0$.
More generally, this definition holds when $r$ is a real number such that each complex power $\xi_j^r$ is well defined.
In particular, if $r=s/h$, with $s,h\in\N$, then for any given 
$y\in\C$ we take $y^{1/h}$ as the principal $h$th root of $y$,
namely
$
y^{1/h}
=
|y|^{1/h}e^{i {\rm Arg}(y)/h}$, with 
$-\pi< {\rm Arg}(y)\leq\pi
$. Thus, we let $\vec \xi^{\, s/h}
:=\big((\xi_0^{1/h})^s,\ldots,(\xi_{k-1}^{1/h})^s\big)$.

   \begin{Thm}\label{1T-h-prod-1-clean}
   	Let $k,n\in\N$. The following identities hold true
   	for every $m\in\N_0$ and every
   	$ 
   	\vec\xi=(\xi_0,\ldots,\xi_{k-1})\in\C^k
   	$:
   		\begin{equation}\label{1e-spec-case1}
   			\H_m(\vec\xi\otimes\vec\zeta_n)
   			=
   			\begin{cases}
   				\H_{m/n}(\vec\xi^{\,n})
   				&\text{if $n$ divides $m$}, \\
   				0
   				&\text{otherwise,}
   			\end{cases}
   		\end{equation}
   		\begin{equation}\label{1e-spec-case2}
   			\E_m(\vec\xi\otimes\vec\zeta_n)
   			=
   			\begin{cases}
   				(-1)^{m-m/n}\E_{m/n}(\vec\xi^{\,n})
   				&\text{if $n$ divides $m\le kn$},\\
   				0
   				&\text{otherwise.}
   			\end{cases}
   		\end{equation}
   \end{Thm}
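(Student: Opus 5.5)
The plan is to work with generating functions. Recall that for any vector $\vec y=(y_1,\ldots,y_r)$ one has, as formal power series (or for $|t|$ small),
$$
\sum_{m\ge0}\H_m(\vec y)\,t^m=\prod_{j=1}^r\frac{1}{1-y_jt},\qquad \sum_{m\ge0}\E_m(\vec y)\,t^m=\prod_{j=1}^r(1+y_jt).
$$
Since the multiset of components of $\vec\xi\otimes\vec\zeta_n$ is $\{\xi_l\zeta_n^j:\ 0\le l\le k-1,\ 0\le j\le n-1\}$, both products factor over $l$ as products over $j$ of factors in $\xi_l\zeta_n^j t$. The single cyclotomic input is the identity $\prod_{j=0}^{n-1}(1-\zeta_n^j u)=1-u^n$, which is just $z^n-1=\prod_j(z-\zeta_n^j)$ evaluated at $z=1/u$ and multiplied by $u^n$ (alternatively it follows from \eqref{e-classVietaz^n-1}).

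For \eqref{1e-spec-case1}, put $u=\xi_l t$. Then
$$
\sum_{m\ge0}\H_m(\vec\xi\otimes\vec\zeta_n)t^m=\prod_{l=0}^{k-1}\frac1{1-\xi_l^n t^n}=\sum_{q\ge0}\H_q(\vec\xi^{\,n})\,t^{nq},
$$
where the last equality is the $\H$ generating function in the variable $T=t^n$ applied to $\vec\xi^{\,n}$. Comparing coefficients of $t^m$ gives $\H_{m/n}(\vec\xi^{\,n})$ when $n\mid m$ and $0$ otherwise.

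For \eqref{1e-spec-case2}, use $\prod_{j}(1+\zeta_n^j u)=\prod_j(1-\zeta_n^j(-u))=1-(-u)^n=1+(-1)^{n+1}u^n$. This gives
$$
\sum_m\E_m(\vec\xi\otimes\vec\zeta_n)t^m=\prod_l\bigl(1+(-1)^{n+1}\xi_l^n t^n\bigr)=\sum_{q=0}^{k}\E_q(\vec\xi^{\,n})(-1)^{(n+1)q}t^{nq}.
$$
Extracting the coefficient of $t^m$ with $m=nq$ gives the sign $(-1)^{(n+1)q}=(-1)^{nq+q}=(-1)^{m+m/n}=(-1)^{m-m/n}$. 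The coefficient vanishes when $n\nmid m$, and also when $q>k$, i.e. $m>kn$, since $\E_q$ of $k$ variables is zero for $q>k$.

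None of these steps is a serious obstacle. The only points needing care are the sign bookkeeping $(-1)^{(n+1)q}=(-1)^{m-m/n}$ and noting that every identity used is polynomial in $\xi$, so working with formal power series in $t$ needs no convergence assumption on $\vec\xi$. The principal-root convention in the Notation is irrelevant here, because only integer powers $\vec\xi^{\,n}$ occur.
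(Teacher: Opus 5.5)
Your proof is correct and is essentially the paper's argument written at the level of generating functions. The paper expands $\H_m(\vec\xi\otimes\vec\zeta_n)$ by the convolution identity of Lemma \ref{1L-new-id-hm}, then discards every term with some $j_t\not\equiv 0 \pmod n$ (for $\E_m$, every term with some $j_t\notin\{0,n\}$) using \eqref{1e-spec-case} and \eqref{e-classVietaz^n-1}; this is exactly coefficient extraction from your factorization $\prod_l\prod_j(1-\xi_l\zeta_n^jt)^{-1}=\prod_l(1-\xi_l^nt^n)^{-1}$ and its $\E$-analogue. The one difference is that you derive the needed values at roots of unity directly from $\prod_{j=0}^{n-1}(1-\zeta_n^ju)=1-u^n$ rather than citing \eqref{1e-spec-case} from \cite{DEL}, which makes your version slightly more self-contained.
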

   
   Taking $k=1$ and $\vec\xi=1$ in \eqref{1e-spec-case1} and  \eqref{1e-spec-case2} yields \eqref{1e-spec-case} and \eqref{e-classVietaz^n-1}, respectively.
   \medskip
  
  As a consequence of Theorem \ref{1T-h-prod-1-clean}, we obtain 
   formulas for the evaluations of $\H_m$ and $\E_m$ at 
   $\vec\xi\otimes\vec\zeta'_n$, where
    $$ 
   \vec\zeta_n'
   :=(\zeta_n,\ldots,\zeta_n^{ n-1})
   $$  
   is the vector whose components form the set ${\cal Z}_n\setminus\{1\}$. See also \eqref{1e-spec-case-1}.
   
 In what follows, $[ t]$ denotes the  integer part of the real number $t$.
   
   \begin{Cor}\label{1T-h-prod-1-cleanCor}
   	Let $k,n\in\N$, with $n\ge 2$. 
   The following identity holds true for every $m\in\N_0$ and every
   $ 
   \vec\xi=(\xi_0,\ldots,\xi_{k-1})\in\C^k
   $:
   		\begin{equation}\label{e-spec-case2}
   			\H_m(\vec\xi\otimes\vec\zeta_n')
   			=
   			\sum_{j=0}^{[ m/n]}
   			(-1)^{m-jn}
   			\E_{m-jn}(\vec\xi\,)
   			\H_j(\vec\xi^{\, n}).
   		\end{equation}
   		Further, 	
   		\begin{enumerate}
   			\item 
   		if $m\leq(n-1)k$, then
   		\begin{equation}\label{1e-em-zetaprime}
   			\E_m(\vec\xi\otimes\vec\zeta_n')
   			= 
   			\sum_{j=0}^{[ m/n]}
   			(-1)^{m-j}
   			\H_{m-jn}(\vec\xi\,)
   			\E_j(\vec\xi^ {\,n}).
   		\end{equation}
   		
   \item if $m>(n-1)k$, then
   			\begin{equation}\label{1e-em-zetaprime*}
   				\sum_{j=0}^{[M/n]}
   			(-1)^{m-j}
   			\H_{m-jn}(\vec\xi\,)
   			\E_j(\vec\xi^ {\,n})=0,
   		\end{equation}
   		where $M:=\min\{m,nk\}$.
   	\end{enumerate}
   \end{Cor}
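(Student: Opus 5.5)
The plan is to work with generating functions. The key observation is that the multiset of components of $\vec\xi\otimes\vec\zeta_n$ is the disjoint union of the multisets of $\vec\xi\otimes\vec\zeta_n'$ and $\vec\xi$. Recall the standard identities
$$\sum_{m\ge0}\H_m(\vec x)t^m=\prod_j(1-x_jt)^{-1},\qquad \sum_{m\ge0}\E_m(\vec x)t^m=\prod_j(1+x_jt).$$
By Theorem \ref{1T-h-prod-1-clean} (equivalently, by $\prod_{l=0}^{n-1}(1-\xi\zeta_n^l t)=1-\xi^n t^n$), we have
$$\sum_m \H_m(\vec\xi\otimes\vec\zeta_n)t^m=\prod_{j}(1-\xi_j^nt^n)^{-1}=\sum_{j}\H_j(\vec\xi^{\,n})t^{jn}.$$
Multiplicativity of the generating function over disjoint unions of multisets then gives
$$\sum_m\H_m(\vec\xi\otimes\vec\zeta_n')t^m=\prod_j(1-\xi_jt)\cdot\prod_j(1-\xi_j^nt^n)^{-1}.$$
Since $\prod_j(1-\xi_jt)=\sum_r(-1)^r\E_r(\vec\xi\,)t^r$, extracting the coefficient of $t^m$ yields \eqref{e-spec-case2}. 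Only terms with $jn\le m$ contribute, which explains the upper limit $[m/n]$.

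For the elementary polynomials I would argue the same way:
$$\sum_m\E_m(\vec\xi\otimes\vec\zeta_n')t^m=\frac{\prod_j\prod_{l}(1+\xi_j\zeta_n^lt)}{\prod_j(1+\xi_jt)}.$$
The numerator equals $\prod_j(1-(-1)^n\xi_j^n t^n)$. To stay consistent with \eqref{1e-spec-case2}, I would write it as $\sum_i(-1)^{in-i}\E_i(\vec\xi^{\,n})t^{in}$. The denominator inverts to $\sum_r(-1)^r\H_r(\vec\xi\,)t^r$. The coefficient of $t^m$ is then
$$\sum_{j}(-1)^{m-jn}(-1)^{jn-j}\H_{m-jn}(\vec\xi\,)\E_j(\vec\xi^{\,n})=\sum_j(-1)^{m-j}\H_{m-jn}(\vec\xi\,)\E_j(\vec\xi^{\,n}).$$
Here $j$ ranges over $0\le j\le[m/n]$, and also $j\le k$ because $\E_j(\vec\xi^{\,n})=0$ for $j>k$. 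So the upper limit can be taken to be $[\min\{m,nk\}/n]=[M/n]$.

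Finally, $\vec\xi\otimes\vec\zeta_n'$ has $(n-1)k$ components, so its generating function is a polynomial of degree at most $(n-1)k$. Hence $\E_m(\vec\xi\otimes\vec\zeta_n')=0$ for $m>(n-1)k$, which gives \eqref{1e-em-zetaprime*}. For $m\le(n-1)k$ we get \eqref{1e-em-zetaprime}. In that case $[M/n]=[m/n]$ whenever $m\le nk$, which always holds since $(n-1)k<nk$, and terms with $j>k$ vanish anyway.

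The main obstacle is bookkeeping of signs and summation ranges. In particular, one must check that the sign coming from $\prod_l(1+\xi\zeta_n^lt)=1-(-\xi t)^n$ combines with the sign of the inverted denominator to give exactly $(-1)^{m-j}$. Everything else is formal power series manipulation, valid for all $\vec\xi\in\C^k$ as identities of polynomials in $\vec\xi$.
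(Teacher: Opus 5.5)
Your proof is correct and follows essentially the same route as the paper. Both write the generating function of $\vec\xi\otimes\vec\zeta_n'$ as that of $\vec\xi\otimes\vec\zeta_n$ times $E_k(-t;\vec\xi\,)$ (resp.\ $H_k(-t;\vec\xi\,)$), extract the coefficient of $t^m$, use Theorem \ref{1T-h-prod-1-clean}, and use the vanishing of $\E_m(\vec\xi\otimes\vec\zeta_n')$ for $m>(n-1)k$. The only difference is cosmetic: you get the generating function of $\vec\xi\otimes\vec\zeta_n$ in closed form from $\prod_{l=0}^{n-1}(1-\zeta_n^l x)=1-x^n$, rather than applying \eqref{1e-spec-case1}--\eqref{1e-spec-case2} coefficientwise. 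Your sign and summation-range bookkeeping is correct.
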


\begin{Rem}
	\label{general1T-h-prod-1-clean} Note that taking  $k=1$ and $\vec\xi=1$ in \eqref{1e-em-zetaprime}  yields 
	$
	\E_m (\vec\zeta_n')=	(-1)^m 
	$
	for $m\leq n-1$,
	which however follows straightforwardly  from \eqref{1main} or, equivalently, from the Vieta formulas \eqref{e-classVieta} applied to $\dsize \frac{z^n-1}{z-1}=1+z+\ldots+z^{n-1}$.
		Further, since the proof of Theorem  \ref{1T-h-prod-1-clean}
 requires the use of \eqref{1e-spec-case}, in view of what we have observed in Remark 	\ref{generalT-cyclic-1} below it is easy to see that all the formulas \eqref{1e-spec-case1}-\eqref{1e-em-zetaprime} still hold when $\vec\zeta_n$ and $\vec\zeta_n'$
	are replaced by  $\vec\zeta_n^r$ and $(\vec\zeta_n')^r$, respectively, where $r$ is a  fixed nonzero rational number for which  the powers $\zeta_n^{kr} $ are all well defined. 
	
	In particular, by using the identity  \eqref{general1e-spec-case} below instead of \eqref{1e-spec-case} we can proceed along the same lines as the proof of Theorem  \ref{1T-h-prod-1-clean} to establish the following identity of which  \eqref{1e-spec-case1} is a particular instance:
		\begin{equation}\label{general1e-spec-case1}
		\H_m(\vec\xi\otimes\vec\zeta_n^r)
		=
		\begin{cases}
			\H_{m/n}(\vec\xi^{\,n})
			&\text{if $n$ divides $m$}, \\
			0
			&\text{otherwise.}
		\end{cases}
	\end{equation}
\end{Rem}
\subsection{Applications to  cyclotomic  polynomials }\label{cyclopoly}
In this sub-section we recall the main properties of the cyclotomic   polynomials $\Phi_n$ and $\Psi_n$ and we state our results. Let us recall the definitions \eqref{defcyclos}: 
 \begin{align*}
 	\Phi_n(z) &:=\prod_{\zeta_n^j\in {\cal Z}_n^*} (z-\zeta_n^j) 
 	=\sum _{j=0}^{\varphi(n)}a_n(j)z^{\varphi(n)-j},
 	\\
 	\Psi_n(z) &:=\prod_{\zeta_n^j\in {\cal Z}_n^{**}} (z-\zeta_n^j) 
 	=\sum _{j=0}^{n-\varphi(n)}b_n(j)z^{n-\varphi(n)-j}.
 \end{align*}
  It is well known that $	\Phi_n(z)$ is irreducible  over the rationals for any $n\in\N$. Further,  $a_n(j), b_n(j)\in\Z$ for any $j$. We refer the reader to   \cite{Moree, S, T}  for more insights on these polynomials. 
 
 We recall the following basic properties. In what follows,
 the letter $p$ (with or without subscript)  is reserved for the prime numbers whose set is
 denoted  by $\Primes$.     
 
 For every $m\in\N$ and every $p\in\Primes$ one has
 \begin{equation}\label{basiccyclo}
 	\Phi_{mp}(z)=\begin{cases} \Phi_m(z^p) & \mbox{if $p$ divides $m$,}
 		\cr\Phi_m(z^p)/\Phi_m(z)& \mbox{otherwise,}\cr\end{cases}
 \end{equation}
 \begin{equation}\label{basicinvcyclo}
 	\Psi_{mp}(z)=\begin{cases} \Psi_m(z^p) & \mbox{if $p$ divides  $m$,}
 		\cr\Psi_m(z^p) \Phi_m(z)& \mbox{otherwise.}\cr\end{cases}
 \end{equation}
 Observe that ${\rm deg}\Psi_{mp}(z)=mp-\varphi(mp)$, where 
 $$
 \varphi(mp)=\begin{cases} p\varphi(m)& \mbox{if $p$ divides $m$,}
 	\cr (p-1)\varphi(m)& \mbox{otherwise.}\cr\end{cases}
 $$
 By applying our generalization of the Vieta formulas  \cite[Th.\ 1.1]{DEL} (see Proposition \ref{T-Viete-3} below), together with \eqref{basiccyclo}, 
 we have obtained
 a relation between the coefficients of the cyclotomic polynomials $\Phi_{m}(z)$ and $\Phi_{mp}(z)$  \cite[Th.\ 1.3]{DL}.
 
 In the next theorem we summarize these relations and  add analogous formulas for the coefficients  of the inverse cyclotomic polynomials $\Psi_{m}(z)$ and $\Psi_{mp}(z)$.
 \begin{Thm}\label{T-cross-prod-2Th}  Let $m\in\N$ and $p\in\Primes$.
 	\begin{enumerate}
 		\item If $p$ divides $m$, then 
 		\begin{equation}\label{newAMdiv1}	a_{mp}(k):=
 			\begin{cases} a_m(k/p) & \mbox{if $p$ divides $k$,}
 				\cr 0& \mbox{otherwise,}\cr\end{cases}
 		\end{equation}
 		for every $k\in\{0,1,\ldots,p\varphi(m)\}$ and 
 		\begin{equation}\label{newAMdiv2}
 			b_{mp}(k):=
 			\begin{cases} b_m(k/p) & \mbox{if $p$ divides $k$,}
 				\cr 0& \mbox{otherwise,}\cr\end{cases}\end{equation}
 		for every $k\in\{0,1,\ldots,mp- p\varphi(m)\}$. 
 		\item
 		If $p$ does not divide $m$, then  
 		\begin{equation}\label{1e-em-cycl}
 			a_{mp}(k)
 			=\sum_{j=0}^{[ k/p]}
 			a_m(j)b_m(k-jp) 
 		\end{equation}
 		for every $k\in\{0,1,  \ldots, (p-1)\varphi(m)\}$
 		and
 		\begin{equation}\label{newAM2}
 			\sum_{ j =0}^{k} b_{mp}(j)b_m(k-j)=
 			\begin{cases} b_m(k/p) & \mbox{if $p$ divides $k$}
 				\cr 0& \mbox{otherwise,}\cr\end{cases}
 		\end{equation}
 		for every $k\in\{0,1,\ldots,mp-p\varphi(m)\}$.
 	\end{enumerate}
 	
 \end{Thm}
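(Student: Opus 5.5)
The plan is to read off all four identities by comparing coefficients in the polynomial identities \eqref{basiccyclo} and \eqref{basicinvcyclo}, after suitable rewriting.

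\emph{Case $p\mid m$.} Here $\Phi_{mp}(z)=\Phi_m(z^p)$. Expanding the right-hand side gives
$$\sum_{j=0}^{\varphi(m)} a_m(j)z^{p(\varphi(m)-j)}.$$
Since $\varphi(mp)=p\varphi(m)$, the exponent $p\varphi(m)-pj$ corresponds to index $k=pj$. So $a_{mp}(k)=a_m(k/p)$ when $p\mid k$, and $a_{mp}(k)=0$ otherwise. This is \eqref{newAMdiv1}. The identity \eqref{newAMdiv2} follows in exactly the same way from $\Psi_{mp}(z)=\Psi_m(z^p)$, using $mp-\varphi(mp)=p(m-\varphi(m))$.

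\emph{Case $p\nmid m$, formula \eqref{1e-em-cycl}.} We have $\Phi_{mp}(z)=\Phi_m(z^p)/\Phi_m(z)$. I would avoid division by rewriting this with $z^m-1=\Phi_m(z)\Psi_m(z)$:
$$\Phi_{mp}(z)(z^m-1)=\Phi_m(z^p)\Psi_m(z).$$
Comparing coefficients of this identity directly does not isolate $a_{mp}(k)$, because of the two-term factor $z^m-1$. Instead I will compare the leading coefficients. On the left, the top $m$ coefficients, counted from degree $\varphi(mp)+m$ downward, are just $a_{mp}(k)$. On the right, the coefficient of $z^{p\varphi(m)+m-\varphi(m)-k}$ is $\sum_{j} a_m(j)b_m(k-jp)$, with the convention $b_m(i)=0$ for $i<0$ or $i>m-\varphi(m)$.

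The match is exact for $k<m$, because the $-1\cdot\Phi_{mp}$ term only contributes from degree $\varphi(mp)$ down. For $k\ge m$ there is a correction term, so I must check the range. Palindromicity saves us here: it suffices to prove the formula for $k\le \varphi(mp)/2$. I must also check that the right-hand side is symmetric, or else handle $k\ge m$ separately.

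\emph{Main obstacle and fallback.} This range issue is the step I expect to be hardest. Rather than rely on symmetry, I would use the power-series approach: work in $\C[[w]]$ with reversed polynomials, $w^{\deg}P(1/w)$. Since $\Phi_m(0)\neq0$, the reversal $\tilde\Phi_{mp}(w)=\tilde\Phi_m(w^p)/\tilde\Phi_m(w)$ holds as power series. Then
$$\frac{1}{\tilde\Phi_m(w)}=\frac{\tilde\Psi_m(w)}{1-w^m}=\tilde\Psi_m(w)\sum_{t\ge0} w^{tm}.$$
This yields \eqref{1e-em-cycl} plus extra terms involving $b_m(k-jp-tm)$ with $t\ge1$. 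It remains to show that these extra terms vanish or cancel in the stated range. That is exactly where the paper's symmetric-polynomial machinery enters: by \eqref{P-coeff-rec-pol}, $b_m(i)=\H_i(\mathcal Z_m^*)$, and $\H$ is periodic mod $m$ by \eqref{periodic}. So $1/\tilde\Phi_m(w)=\sum_i \H_i(\mathcal Z_m^*)w^i$, with no truncation at $m-\varphi(m)$. The clean statement is therefore to read $b_m(i)$ as $\H_i(\mathcal Z_m^*)$ for all $i\ge 0$. I would then check that for $k\le(p-1)\varphi(m)$ the indices $k-jp$ stay within the range where this agrees with the paper's convention, using \eqref{1main} to show $\H_i(\mathcal Z_m^*)=0$ for $m-\varphi(m)<i<m$.

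\emph{Case $p\nmid m$, formula \eqref{newAM2}.} Here $\Psi_{mp}(z)=\Psi_m(z^p)\Phi_m(z)$, so multiplying both sides by $\Psi_m(z)$ gives
$$\Psi_{mp}(z)\Psi_m(z)=\Psi_m(z^p)(z^m-1).$$
I compare coefficients after reversal. The left side gives the convolution $\sum_{j=0}^{k} b_{mp}(j)b_m(k-j)$. The right side gives $b_m(k/p)$ when $p\mid k$, plus a shifted copy coming from the $-1$, which starts at index $k=m$. The same range check as above is needed: I would verify either that the shifted copy does not interfere for $k\le mp-p\varphi(m)$, or that it is absorbed using the anti-palindromic symmetry of $\Psi_m$.
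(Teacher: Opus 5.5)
\textbf{Verdict: the proposal has a genuine gap in part (2); part (1) is fine.}

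Part (1) is correct and is exactly the paper's argument: expand $\Phi_m(z^p)$, resp.\ $\Psi_m(z^p)$, and match exponents.

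For part (2), your reversed power-series division is in substance the paper's route too. Proposition~\ref{T-Viete-3}, applied to $\Phi_m(z^p)=\Phi_{mp}(z)\Phi_m(z)$ and to $\Psi_{mp}(z)=\Psi_m(z^p)\Phi_m(z)$, amounts to multiplying by $1/\tilde\Phi_m(w)=\sum_{i\ge0}\H_i(\mathcal Z_m^*)w^i$. It yields
\[
a_{mp}(k)=\sum_{j=0}^{[k/p]}a_m(j)\H_{k-jp}(\mathcal Z_m^*),\qquad \sum_{j=0}^{k}b_{mp}(j)\H_{k-j}(\mathcal Z_m^*)=c_m(k),
\]
where $c_m(k)=b_m(k/p)$ if $p\mid k$ and $c_m(k)=0$ otherwise.

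The gap is the step you planned after this. You propose to check that the indices $k-jp$ (or $k-j$) stay below $m$, so that $\H_i(\mathcal Z_m^*)$ agrees with $b_m(i)$ extended by zero. That check fails. Already for $j=0$ the index is $k$ itself, which in the stated ranges can reach or exceed $m$. There \eqref{periodic} and \eqref{P-coeff-rec-pol} give $\H_i(\mathcal Z_m^*)=b_m(\{i\}_m)$, which is typically nonzero; for instance $\H_m(\mathcal Z_m^*)=1$.

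In fact, under the zero convention both identities in (2) are false. Take $m=2$, $p=3$, so $\Phi_6(z)=z^2-z+1$, $\Psi_2(z)=z-1$ and $\Psi_6(z)=z^4+z^3-z-1$.
\begin{itemize}
\item At $k=2$ one has $a_6(2)=1$, while $a_2(0)b_2(2)=0$, so \eqref{1e-em-cycl} fails.
\item At $k=2$ the left side of \eqref{newAM2} equals $b_6(1)b_2(1)=-1\ne0$, while the right side is $0$.
\end{itemize}
So the shifted copy $-c_m(k-m)$ coming from $\Psi_{mp}\Psi_m=\Psi_m(z^p)(z^m-1)$ genuinely contributes once $k\ge m$. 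No range argument, and no palindromic or anti-palindromic symmetry, can remove it.

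The fix is the reading you already hinted at, made into the convention itself. Interpret $b_m(i)$ as $\H_i(\mathcal Z_m^*)$ for every $i\ge0$, i.e.\ as $b_m(\{i\}_m)$, and as $0$ when $m-\varphi(m)<\{i\}_m<m$. Then nothing is left to verify. Since $1/\tilde\Phi_m(w)=\tilde\Psi_m(w)/(1-w^m)$, the geometric factor is exactly what absorbs the shifted copies. Both identities then drop out of $\tilde\Phi_{mp}(w)=\tilde\Phi_m(w^p)/\tilde\Phi_m(w)$ and $\tilde\Psi_{mp}(w)/\tilde\Phi_m(w)=\tilde\Psi_m(w^p)$ for every $k$. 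So drop the reconciliation step and state this convention explicitly.

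The paper's own proof tacitly relies on the same convention. It says the result follows ``in view of \eqref{P-coeff-rec-pol}'', but that identity only covers $0\le j\le m-\varphi(m)$.
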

 \smallskip

 Before stating the next proposition concerning the roots of $\Phi_n(z^k)$, where $n\ge2$ is squarefree, we recall the following notation from   \cite{DEL}.    
 
 We let
$\vec\zeta_n^*:=(\zeta_{1,n}^*,\ldots ,\zeta_{j,n}^*,\ldots,\zeta_{n,n}^*)$, where 
	$$
	\zeta_{j,n}^*:=\begin{cases} \zeta_n^j  & \mbox{if ${\rm gcd}(j,n)=1$},\cr
		0& \mbox{otherwise.}
		\cr\end{cases}
	$$
Plainly, the nonzero components of $\vec\zeta_n^{* }\in\C^n$ are the $n$th primitive roots of unity which form the set ${\cal Z}_n^*$, so that we can write
${\cal H}_k(\vec\zeta_n^*)={\cal H}_k({\cal Z}_n^*)$ and
${\cal E}_k(\vec\zeta_n^*)={\cal E}_k({\cal Z}_n^*)$.

 \begin{Prop}\label{L-Kron2}  
 	Let 	$n=p_1p_2\cdots p_t$ be squarefree.  For any given $k\in\N$, the roots of $\Phi_n(z^k)$ are the nonzero components of the Kronecker product
 	$$
 	\vec\zeta_{k}\otimes	(\vec\zeta_{p_1}^*)^{\frac 1k}\otimes\cdots \otimes (\vec\zeta_{p_t}^*)^{\frac 1k},
 	$$
 	where $\vec\zeta_{p_j}^*:=(0,\zeta_{p_j}, \zeta_{p_j}^2,\ldots,\zeta_{p_j}^{p_j-1})$.
 	\end{Prop}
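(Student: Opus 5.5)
The plan is to identify both sides as multisets of nonzero complex numbers of the same size, and to check that every nonzero component of the Kronecker product is a root of $\Phi_n(z^k)$. Since $n$ is squarefree, $\varphi(n)=\prod_{j=1}^t(p_j-1)$, so $\Phi_n(z^k)$ has degree $k\varphi(n)$. The nonzero components of $\vec\zeta_{k}\otimes(\vec\zeta_{p_1}^*)^{1/k}\otimes\cdots\otimes(\vec\zeta_{p_t}^*)^{1/k}$ are exactly the products
$$
w=\zeta_k^{r}\,(\zeta_{p_1}^{j_1})^{1/k}\cdots(\zeta_{p_t}^{j_t})^{1/k},
\qquad 0\le r\le k-1,\ 1\le j_i\le p_i-1,
$$
since the zero component of each $\vec\zeta_{p_i}^*$ kills its products. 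There are therefore $k\prod_i(p_i-1)=k\varphi(n)$ nonzero components, counted with multiplicity.

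The first step is to show that each such $w$ is a root. By the definition of the principal root, $\big((\zeta_{p_i}^{j_i})^{1/k}\big)^k=\zeta_{p_i}^{j_i}$, and $\zeta_k^{rk}=1$. Hence
$$
w^k=\prod_i\zeta_{p_i}^{j_i}=e^{2\pi i\sum_i j_i/p_i}.
$$
Because the $p_i$ are distinct primes, the Chinese remainder theorem identifies $\prod_i\zeta_{p_i}^{j_i}$ with $\zeta_n^{J}$, where $J\equiv j_i\,(n/p_i)\Mod{p_i}$ for each $i$. Since $p_i\nmid j_i$ and $p_i\nmid n/p_i$, we get $\gcd(J,n)=1$, so $w^k$ is a primitive $n$th root of unity and $\Phi_n(w^k)=0$. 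Conversely, the CRT map $(j_1,\dots,j_t)\mapsto\prod_i\zeta_{p_i}^{j_i}$ is a bijection from $\prod_i\{1,\dots,p_i-1\}$ onto ${\cal Z}_n^*$; this is where squarefreeness is used.

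The second step is to match multiplicities. Fix a primitive root $\eta\in{\cal Z}_n^*$ and the unique tuple $(j_i)$ with $\prod_i\zeta_{p_i}^{j_i}=\eta$. Put $u:=\prod_i(\zeta_{p_i}^{j_i})^{1/k}$, so $u^k=\eta$ and $u\neq0$. The $k$ components coming from this tuple are $\zeta_k^r u$ for $r=0,\dots,k-1$. These are pairwise distinct and are precisely the $k$ distinct solutions of $z^k=\eta$. Different $\eta$ give disjoint solution sets. So the multiset of nonzero components is a set of $k\varphi(n)$ distinct roots of $\Phi_n(z^k)$, and it exhausts them, since $\Phi_n(z^k)=\prod_{\eta\in{\cal Z}_n^*}(z^k-\eta)$ has exactly these simple roots.

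The main subtlety is the principal-root convention. The product of principal $k$th roots is generally not the principal $k$th root of the product. That is harmless here, because only $u^k=\eta$ is needed, and multiplying by the full vector $\vec\zeta_k$ recovers every $k$th root regardless of which branch $u$ happens to be. The rest of the argument is the CRT bijection, which I expect to be routine.
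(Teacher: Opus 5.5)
Your proof is correct and follows essentially the same route as the paper: the Chinese remainder theorem writes each primitive $n$th root of unity as a product of primitive $p_i$th roots, and multiplying one $k$th root of $\eta\in{\cal Z}_n^*$ by the components of $\vec\zeta_k$ yields all solutions of $z^k=\eta$. Your version is more complete than the paper's. The paper only shows that every root is a component, and it tacitly uses $(\vec\zeta_{p_1}^*\otimes\cdots\otimes\vec\zeta_{p_t}^*)^{1/k}=(\vec\zeta_{p_1}^*)^{1/k}\otimes\cdots\otimes(\vec\zeta_{p_t}^*)^{1/k}$, which fails for principal roots; you instead match multiplicities by counting and correctly observe that the branch issue disappears after tensoring with $\vec\zeta_k$.
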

 	The proof is given in \S\ref{Ths}.

 	\begin{Rem} 
 		In view of \eqref{P-coeff-rec-pol0} and \eqref{P-coeff-rec-pol}, an immediate consequence of Proposition \ref{L-Kron2}  is that 
 		if $n=p_1p_2\cdots p_t$ is squarefree, then
 		\begin{align*}
 			a_n(j)= & (-1)^j \E_j(\vec\zeta_{p_1}^*\otimes\cdots \otimes \vec\zeta_{p_t}^*),\quad \forall j=0,1,\ldots,\prod_{s=1}^t(p_s-1),
 			\\
 			b_n(j) &=\H_j(\vec\zeta_{p_1}^*\otimes... \otimes \vec\zeta_{p_t}^*),\quad \forall j=0,1,\ldots, n-\prod_{s=1}^t(p_s-1).
 		\end{align*}
 	
 	\end{Rem} 
 			\begin{Rem}
 			Since \eqref{basiccyclo} yields
 			$\Phi_{m}(z)= \Phi_{\gamma(m)  }(z^{m/\gamma(m)} )$,
 			where
 			$$
 			\gamma(m):=
 			\begin{cases} 1& \mbox{if $m=1$},\cr
 				\prod_{p|m}p  & \mbox{if $m>1$,}
 				\cr\end{cases}
 			$$ 
 			is the so-called (squarefree) kernel of $m$, from Proposition \ref{L-Kron2} it follows that the roots of $\Phi_m(z)$, with $\gamma(m):=p_1p_2\cdots p_t$,  are 
 			the nonzero components of 
 			$$
 			\vec\zeta_{m/\gamma(m)}\otimes	(\vec\zeta_{p_1}^*)^{\gamma(m)/m}\otimes\cdots \otimes (\vec\zeta_{p_t}^*)^{\gamma(m)/m}.
 			$$
		In particular, if $p$ does not divide $m$, then $\gamma(mp)/mp=\gamma(m)/m$, so that ${\cal Z}_{mp}^*$ is the set of the nonzero components of the vector 	\begin{align*}\vec\zeta_{mp}^{*}&=
		\vec\zeta_{m/\gamma(m)}\otimes	(\vec\zeta_{p_1}^*)^{\gamma(m)/m}\otimes\cdots \otimes (\vec\zeta_{p_t}^*)^{\gamma(m)/m}\otimes (\vec\zeta_p^*)^{\gamma(m)/m}\\&=
		\vec\zeta_m^{*}\otimes (\vec\zeta_p^*)^{\gamma(m)/m}.
		\end{align*}
Thus, in view of \eqref{P-coeff-rec-pol}, \eqref{newAMdiv2}, and \eqref{newAM2}, we obtain the following
	 cyclotomic identities for $\H_m$:
	 	\begin{enumerate}
	 		\item 
	 		If $p\in\Primes$ divides $m$, then 
	 		$$
	 			\H_k (\vec\zeta_{mp}^{*}):=
	 			\begin{cases} \H_{k/p} (\vec\zeta_{m}^{*}) & \mbox{if $p\vert k$,}
	 				\cr 0& \mbox{otherwise,}\cr\end{cases}
	 		$$
	 		for every $k\in\{0,\ldots,mp-p\varphi(m)\}$.
	 		
	 		\item	If $p$ does not divide $m$, then
	 		\begin{equation}\label{newAMSim}
	 			\sum_{ j =0}^{k} \H_j  (\vec\zeta_m^{*}\otimes (\vec\zeta_p^*)^{\gamma(m)/m})\H_{k-j}(\vec\zeta_m^*)=
	 			\begin{cases} \H_{k/p} (\vec\zeta_{m}^{*}) & \mbox{if $p\vert k$}
	 				\cr 0& \mbox{otherwise,}\cr\end{cases}
	 		\end{equation}
	 		for every $k\in\{0,\ldots,mp-p\varphi(m)\}$.
	 	\end{enumerate}
	 	
Observe that by applying \eqref{1e- new-id-hm} of Lemma \ref{1L-new-id-hm} below to the left-hand side of \eqref{newAMSim} we obtain 
	$$
	\H_k  (\vec\zeta_m^{*}\otimes (\vec\zeta_p^*)^{\gamma(m)/m},\vec\zeta_m^*)=
	\H_k  (\vec\zeta_m^{*}\otimes \vec\zeta_p^{\,\gamma(m)/m})=
	\begin{cases} \H_{k/p} (\vec\zeta_{m}^{*}) & \mbox{if $p\vert k$}
		\cr 0& \mbox{otherwise.}\cr\end{cases}
	$$
	Hence,  \eqref{newAMSim} is a particular instance of \eqref{general1e-spec-case1} for $r=\gamma(m)/m$.
\end{Rem}

\begin{Rem}
	Notice that if $\P_{\vec x}(z)$, $\P_{\vec y}(z)$, and $\P_{\vec x\otimes \vec y}(z)$ are
	the monic polynomials  whose  roots are  the components of $\vec x\in\C^n$, $\vec y\in\C^k$, and $\vec x\otimes \vec y$, respectively, then
	$$
	\P_{\vec x\otimes \vec y}(z)=\prod_{i=1}^{n}\prod_{j=1}^{k}(z-x_i y_j)
	= 	\prod_{i=1}^n x_i^{ k}\P_{\vec y}\left(\frac{z}{x_i}\right)
	=\prod_{j=1}^k y_j^{n} \P_{\vec x} \left(\frac{z}{y_j}\right).
	$$
In view of Proposition \ref{L-Kron2}, for $p_1\not=p_2$ this yields 
$$
\Phi_{p_1p_2}(z)=\prod_{j=1}^{p_1-1}\zeta_{p_1}^{j(p_2-1)}\Phi_{p_2}\left(\frac{z}{\zeta_{p_1}^j}\right)
=\prod_{j=1}^{p_2-1}\zeta_{p_2}^{j(p_1-1)}\Phi_{p_1}\left(\frac{z}{\zeta_{p_2}^j}\right).
$$  
\end{Rem}
 \medskip

 \subsection{Some explicit expressions for $\H_m({\cal Z}_n^{*})$ and $\E_m({\cal Z}_n^{*})$}
  In the next corollary we derive some explicit 
 expressions for
 $$
 \H_m({\cal Z}_{p_1p_2}^{*}),\ \E_m({\cal Z}_{p_1p_2}^{*}),\ \H_m({\cal Z}_{p_1p_2p_3}^{*}),\
 \E_m({\cal Z}_{p_1p_2p_3}^{*}),
 $$
 where $p_1,p_2,p_3$ are distinct primes.
 The identities \eqref{P-coeff-rec-pol0} and \eqref{P-coeff-rec-pol} show that such expressions are related to the coefficients of the so-called binary and ternary cyclotomic polynomials.
 We refer the reader to \cite{S} for an overview on these polynomials.
 We use the indicator
 function of a set $A$ as
 $$ 
 \mathbf 1_A(x)
 :=
 \begin{cases}
 	1,&x\in A,\\
 	0,&x\notin A.
 \end{cases}
 $$
 
 \begin{Cor}\label{example}  
 	Let $m\in\N_0$ and let $p_1,p_2,p_3$ be primes such that $p_1<p_2<p_3$. For any given integer $j$, define 
 	$$
 	\Delta_j:=
 	\mathbf 1_{\big[0,[j/p_2]\big]}\bigl(b_{0,j}\bigr)
 	-
 	\mathbf 1_{\big[0,[j/p_2]\big]}\bigl(b_{1,j}\bigr),
 	$$
 	where $b_{0,j},b_{1,j}\in P_1:=[0,p_1-1]$ are such that
 	$ 
 	p_2b_{0,j}\equiv j\pmod {p_1}$ and $
 	p_2b_{1,j}\equiv j-1\pmod {p_1}
 	$. We have that
 	\begin{enumerate}
 	\item[] \begin{equation}	\label{e-H2primes}
 		\H_m({\cal Z}_{p_1p_2}^{*})=
 		\mathbf 1_{P_1}(\{m\}_{p_2})
 		\H_{[ m/p_2]}({\cal Z}_{p_1}^{*});
 \end{equation}
 \item[] \begin{equation}
 \label{e-E2primes}
 		(-1)^m\E_m({\cal Z}_{p_1p_2}^{*})
 		=
 	\Delta_m\ \hbox{when}\ m\le |{\cal Z}_{p_1p_2}^{*}|;
 \end{equation}
 Further,
 	\item[] \begin{equation}	\label{e-H3-explicit}
 		\H_m({\cal Z}_{p_1p_2p_3}^{*})
 		=
 		\sum_{j\in I}\mathbf 1_{P_1}(\{j\}_{p_2})\Delta_{m-jp_3}\,\H_{[ j/p_2]}({\cal Z}_{p_1}^{*})
 	\end{equation}
 	where $I:=\left[0,\left[\frac{m}{p_3}\right]\right]
 	\cap\left[\frac{m}{p_3}-\frac{(p_1-1)(p_2-1)}{p_3},+\infty\right)$;
 	\item[] \begin{align}	\label{e-E3-explicit}
 			&(-1)^m\E_m({\cal Z}_{p_1p_2p_3}^{*})
 		=\nonumber\\
 		&\sum_{j=0}^{[ m/p_3]}
 		\mathbf 1_{P_1}(\{m-jp_3\}_{p_2})
 		\mathbf 1_{P_1}\left(\left\{\left[\frac {m-jp_3}{p_2}\right]\right\}_{p_2}\right)\Delta_j\,
 		\H_{\left[\left[\frac {m-jp_3}{p_2}\right]/p_2\right]}({\cal Z}_{p_1}^{*}),
 	\end{align}
 		\ \hbox{when}\ $m\le |{\cal Z}_{p_1p_2p_3}^{*}|$.
 	\end{enumerate}
 	 \end{Cor}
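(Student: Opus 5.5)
The plan is to reduce everything to Corollary \ref{1T-h-prod-1-cleanCor}, using Proposition \ref{L-Kron2} with $k=1$. For squarefree $n$ that proposition says ${\cal Z}^*_{p_1p_2}$ is the set of nonzero components of $\vec\zeta^*_{p_1}\otimes\vec\zeta^*_{p_2}$, and ${\cal Z}^*_{p_1p_2p_3}$ is the set of nonzero components of $(\vec\zeta^*_{p_1}\otimes\vec\zeta^*_{p_2})\otimes\vec\zeta^*_{p_3}$. Zero components affect neither $\H_m$ nor $\E_m$, and the nonzero components of $\vec\zeta^*_{p}$ are those of $\vec\zeta_p'$. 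Hence, as multisets of nonzero entries,
$$
{\cal Z}^*_{p_1p_2}=\vec\xi\otimes\vec\zeta_{p_2}' \ \hbox{with}\ \vec\xi={\cal Z}^*_{p_1},
\qquad
{\cal Z}^*_{p_1p_2p_3}=\vec\xi\otimes\vec\zeta_{p_3}' \ \hbox{with}\ \vec\xi={\cal Z}^*_{p_1p_2}.
$$
Moreover, since ${\rm gcd}(q,n)=1$, raising to the $q$th power permutes ${\cal Z}^*_n$. Therefore $\vec\xi^{\,p_2}={\cal Z}^*_{p_1}$ in the first case and $\vec\xi^{\,p_3}={\cal Z}^*_{p_1p_2}$ in the second, as multisets.

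\emph{Base values.} By \eqref{1e-spec-case-1}, or Remark \ref{general1T-h-prod-1-clean}, we have $\E_i({\cal Z}^*_{p_1})=(-1)^i$ for $0\le i\le p_1-1$ and $\E_i({\cal Z}^*_{p_1})=0$ otherwise. Also $\H_i({\cal Z}^*_{p_1})$ equals $1$ if $i\equiv 0$, equals $-1$ if $i\equiv1 \pmod{p_1}$, and equals $0$ otherwise.

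\emph{The binary formulas.} For \eqref{e-H2primes}, apply \eqref{e-spec-case2} with $n=p_2$ to get $\H_m=\sum_j(-1)^{m-jp_2}\E_{m-jp_2}({\cal Z}^*_{p_1})\H_j({\cal Z}^*_{p_1})$. The factor $\E$ is nonzero only when $0\le m-jp_2\le p_1-1<p_2$. This forces $j=[m/p_2]$ and $\{m\}_{p_2}\in P_1$. The sign $(-1)^{\{m\}_{p_2}}$ cancels against $\E_{\{m\}_{p_2}}({\cal Z}^*_{p_1})=(-1)^{\{m\}_{p_2}}$, which gives \eqref{e-H2primes}.

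For \eqref{e-E2primes}, take $k=p_1-1$ and $m\le (p_2-1)(p_1-1)$ in \eqref{1e-em-zetaprime}. Using $\E_j({\cal Z}^*_{p_1})=(-1)^j$, we get $(-1)^m\E_m=\sum_{j=0}^{[m/p_2]}\H_{m-jp_2}({\cal Z}^*_{p_1})$. By the base values, this equals the number of $j\in[0,[m/p_2]]$ with $jp_2\equiv m \pmod{p_1}$, minus the number with $jp_2\equiv m-1 \pmod{p_1}$. The key observation is $[m/p_2]\le (p_1-1)(p_2-1)/p_2<p_1$. So each congruence has at most one solution in the range, namely $b_{0,m}$, respectively $b_{1,m}$. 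This is exactly $\Delta_m$.

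\emph{The ternary formulas.} For \eqref{e-H3-explicit}, apply \eqref{e-spec-case2} with $\vec\xi={\cal Z}^*_{p_1p_2}$ and $n=p_3$:
$$
\H_m({\cal Z}^*_{p_1p_2p_3})=\sum_j (-1)^{m-jp_3}\E_{m-jp_3}({\cal Z}^*_{p_1p_2})\H_j({\cal Z}^*_{p_1p_2}).
$$
Here $\E_{m-jp_3}({\cal Z}^*_{p_1p_2})$ vanishes unless $0\le m-jp_3\le\varphi(p_1p_2)=(p_1-1)(p_2-1)$; this restriction is exactly $j\in I$. We then substitute \eqref{e-E2primes} for the $\E$ factor, which gives $\Delta_{m-jp_3}$, and \eqref{e-H2primes} for the $\H$ factor.

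For \eqref{e-E3-explicit}, use \eqref{1e-em-zetaprime} with $k=\varphi(p_1p_2)$. It applies because $m\le|{\cal Z}^*_{p_1p_2p_3}|=(p_3-1)k$. This gives
$$
(-1)^m\E_m=\sum_{j\le [m/p_3]}\H_{m-jp_3}({\cal Z}^*_{p_1p_2})\,(-1)^j\E_j({\cal Z}^*_{p_1p_2}).
$$
Since $j\le m/p_3\le k$, \eqref{e-E2primes} applies and turns $(-1)^j\E_j({\cal Z}^*_{p_1p_2})$ into $\Delta_j$. Finally, \eqref{e-H2primes} expresses $\H_{m-jp_3}({\cal Z}^*_{p_1p_2})$ through an indicator and $\H$ at ${\cal Z}^*_{p_1}$.

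\emph{Main obstacle.} I expect the hardest step to be this last bookkeeping. A direct substitution of \eqref{e-H2primes} yields $\mathbf 1_{P_1}(\{m-jp_3\}_{p_2})\,\H_{[(m-jp_3)/p_2]}({\cal Z}^*_{p_1})$. To reach the nested form displayed in \eqref{e-E3-explicit}, I would check the index ranges carefully and reconcile the two expressions. If they do not match, I would record the directly derived form instead.
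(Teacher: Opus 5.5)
Your route is the paper's own proof. You use Proposition \ref{L-Kron2} (with $k=1$) to write ${\cal Z}^*_{p_1p_2}$ and ${\cal Z}^*_{p_1p_2p_3}$ as Kronecker products with $\vec\zeta_{p_2}'$ and $\vec\zeta_{p_3}'$, then apply Corollary \ref{1T-h-prod-1-cleanCor}. The remaining ingredients are the base values of $\H_i$ and $\E_i$ on ${\cal Z}^*_{p_1}$, the bound $[m/p_2]<p_1$ for uniqueness, and the restriction to $j\in I$. Your derivations of \eqref{e-H2primes}, \eqref{e-E2primes} and \eqref{e-H3-explicit} are correct and match the paper step by step.

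On your ``main obstacle'': no reconciliation is possible, because the nested form printed in \eqref{e-E3-explicit} is wrong. Put $q=[(m-jp_3)/p_2]$. By \eqref{e-H2primes}, the printed factor $\mathbf 1_{P_1}(\{q\}_{p_2})\H_{[q/p_2]}({\cal Z}^*_{p_1})$ equals $\H_q({\cal Z}^*_{p_1p_2})$. The substitution, however, correctly produces $\H_q({\cal Z}^*_{p_1})$, so the display applies \eqref{e-H2primes} once too often.

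Take $(p_1,p_2,p_3)=(3,5,7)$ and $m=5$. Only $j=0$ contributes, with $\Delta_0=1$ and $q=1$. Your form gives $\H_1({\cal Z}^*_3)=-1=a_{105}(5)=(-1)^5\E_5({\cal Z}^*_{105})$, while the printed one gives $\H_0({\cal Z}^*_3)=1$. For $m=7$, your form gives $-1+\Delta_1=-2=a_{105}(7)$, and the printed one gives $0$.

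The paper's proof (``by applying \eqref{e-H2primes} and \eqref{e-E2primes}'') and Remark \ref{ternaryflat} actually use your single-floor expression. So record
$$
(-1)^m\E_m({\cal Z}^*_{p_1p_2p_3})=\sum_{j=0}^{[m/p_3]}\mathbf 1_{P_1}(\{m-jp_3\}_{p_2})\,\Delta_j\,\H_{[(m-jp_3)/p_2]}({\cal Z}^*_{p_1})
$$
as the correct version of \eqref{e-E3-explicit}, valid for $m\le|{\cal Z}^*_{p_1p_2p_3}|$.
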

 The proof is given in \S\ref{Ths}.

\begin{Rem} 
		From \eqref{1e-spec-case-1}, \eqref{e-H2primes}, and \eqref{e-E2primes} it follows that
	$$
	\H_j({\cal Z}_{p_1p_2}^{*}),\ \E_j({\cal Z}_{p_1p_2}^{*})\in\{-1,0,1\}.
	$$
This observation, together with the  formulas \eqref{P-coeff-rec-pol0} and \eqref{P-coeff-rec-pol},  shows the well known fact that 
	if $n$ has two prime factors, then
	the so-called binary polynomials $\Phi_n(z)$ and $\Psi_n(z)$ are flat, i.e., the maximum absolute value of their coefficients is $1$. In the case of the binary cyclotomic polynomial 
	  this result was first established by Migotti in 1883 \cite{Mi} and then it was reproved and extended by various authors. We also have recently given a simple and direct proof in  \cite[Th.\ 1.4]{DL}. The reader is referred  to \cite{Moree} for the case of the binary inverse cyclotomic polynomial.

Concerning the ternary polynomials $\Phi_n(z)$ and $\Psi_n(z)$, where $n$ has three prime factors,  we observe that
 \eqref{e-H3-explicit} reduces to a
	particularly simple expression for $\H_{p_3}({\cal Z}_{p_1p_2p_3}^{*})$, which in some cases makes it easy to deduce that the ternary polynomials $\Psi_n(z)$ are not flat. More precisely, taking $m=p_3$ gives 
	$$
	I\cap\Z=	\begin{cases} \{0\}&  \mbox{if  $p_3\ge \varphi(p_1p_2)$,} 
		\cr  \{0,1\}& \mbox{otherwise,}\cr
	\end{cases} 
	$$
	where $I$ is defined as in \eqref{e-H3-explicit}.
	Thus,  bearing in mind that $ 
	\E_0(\vec z)=\H_0(\vec z)=1$,
	the identity \eqref{e-H3-explicit} reduces to
	$$
	\H_{p_3}({\cal Z}_{p_1p_2p_3}^{*})=
	\begin{cases} \Delta_{p_3}&  \mbox{if  $p_3\ge \varphi(p_1p_2)$,} 
		\cr  \Delta_{p_3}+\Delta_0& \mbox{otherwise.}\cr
	\end{cases} 
	$$
	where
	$$
	\Delta_0:=
	\mathbf 1_{\{0\}}\bigl(b_{0,0}\bigr)
	-
	\mathbf 1_{\{0\}}\bigl(b_{1,0}\bigr),\ \Delta_{p_3}:=
	\mathbf 1_{\big[0,[p_3/p_2]\big]}\bigl(b_{0,p_3}\bigr)
	-
	\mathbf 1_{\big[0,[p_3/p_2]\big]}\bigl(b_{1,p_3}\bigr).
	$$
	Recall that $b_{0,0},b_{1,0},b_{0,p_3},b_{1,p_3}\in [0,p_1-1]$ are such that
	$ 
	p_2b_{0,0}\equiv 0\pmod {p_1}$, $
	p_2b_{1,0}\equiv -1\pmod {p_1}
	$,
	$ 
	p_2b_{0,p_3}\equiv p_3\pmod {p_1}$ and $
	p_2b_{1,p_3}\equiv p_3-1\pmod {p_1}
	$.
	Consequently,
	$$
	\H_{p_3}({\cal Z}_{p_1p_2p_3}^{*})=
	\begin{cases} \Delta_{p_3}&  \mbox{if  $p_3\ge \varphi(p_1p_2)$,} 
		\cr  \Delta_{p_3}+1& \mbox{otherwise.}\cr
	\end{cases} 
	$$
	In particular, this shows that  
	$ 
	\H_{p_3}({\cal Z}_{p_1p_2p_3}^{*})\in\{0,1,2\}.
	$ 
	Further, assuming that $p_3< \varphi(p_1p_2)$, one has $	\H_{p_3}({\cal Z}_{p_1p_2p_3}^{*})
	=2$ if and only if
	$
	b_{0,p_3}\le [p_3/p_2]<b_{1,p_3}$. 	In view of  \eqref{P-coeff-rec-pol}, this reveals that the inverse cyclotomic polynomial $\Psi_{p_1p_2p_3}(z)$, with $p_3< \varphi(p_1p_2)$, is not flat.
	
	We postpone  some analogous considerations for the ternary cyclotomic polynomial $\Phi_{p_1p_2p_3}(z)$ to Remark \ref{ternaryflat} below, and refer \cite{ Bz, Moree}  for related results on the coefficients of ternary inverse cyclotomic polynomials.
	
\end{Rem}

\section{Lemmata}\label{preliminaries}

\subsection{Generating functions  }

The generating functions associated to the polynomials $\H_m(\vec z\,)={\cal H}_m (z_1,\ldots,z_n)$ and  $\E_m(\vec z\,)=	{\cal E}_m (z_1,\ldots,z_n)$ are, respectively
\begin{equation}\label{genH}
	H_n(t;\vec z\,):=\prod_{j=1 }^{n}\frac{1} {1-tz_j}=\sum_{m=0}^\infty\H_m(\vec z\,)t^m, 
\end{equation}
\begin{equation}\label{genE}
	E_n(t;{\vec z}):=
	\prod_{j=1 }^n( 1+tz_j )=\sum_{m=0}^n\E_m(\vec z\,)t^m,
\end{equation}
where the series in \eqref{genH} is a formal power series.
It follows that 
\begin{equation}\label{e-gen-funct1}
	\H_m(\vec z\,)=\frac{1}{m!}\frac{{\rm d}^m}{{\rm d}t^m} H_n(t;\vec z\,)_{\vert_{t=0}},  \qquad
	\E_m(\vec z\,)=\frac{1}{m!}\frac{{\rm d}^m}{{\rm d}t^m} E_n(t;\vec z\,)_{\vert_{t=0}},
\end{equation}
for every $m\in\N_0$.
Since in the first formula one has the formal derivatives of $H_n(t;\vec z\,)$, it is worth 
recalling that both the Leibniz product rule and the chain rule hold also for the formal derivatives \cite{St}.

\subsection{Convolution identities}

The identities of the next lemma appear e.g. in  \cite[Th.\ 1.1]{Merca} (see also \cite{AE}) in a more general setting. We sketch the proof for convenience of the reader.

\begin{Lemma}\label{1L-new-id-hm}
	Let $m\in\N_0$ and $\vec v_1\in \C^{d_1},\ldots,\vec v_n\in \C ^{d_n}$.
	 We have that
		\begin{equation}\label{1e- new-id-hm}
			\H_m (\vec v_1,\dots,\vec v_n)=\sum_{m_1+\ldots+m_n=m\atop m_i\ge 0}\H_{m_1}(\vec v_{1})\cdots\H_{m_n}(\vec v_n).
		\end{equation}
		\begin{equation}\label{1e- new-id-em}
			\E_m (\vec v_1,\dots,\vec v_n)=\sum_{m_1+\ldots+m_n=m\atop{0\leq m_i\leq d_i} }\E_{m_1}(\vec v_{1})\cdots\E_{m_n}(\vec v_n).
		\end{equation} 
\end{Lemma}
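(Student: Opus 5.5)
The plan is to read both identities off the generating functions \eqref{genH} and \eqref{genE}. Concatenating the vectors $\vec v_1,\dots,\vec v_n$ into a single vector $(\vec v_1,\dots,\vec v_n)\in\C^{d_1+\cdots+d_n}$ only changes the order of the factors in the defining products. For the complete homogeneous polynomials this gives
$$
H_{d_1+\cdots+d_n}(t;(\vec v_1,\dots,\vec v_n))=\prod_{i=1}^n H_{d_i}(t;\vec v_i),
$$
as an identity of formal power series in $t$. We then expand each factor as $\sum_{m_i\ge0}\H_{m_i}(\vec v_i)t^{m_i}$ and compare the coefficients of $t^m$ on both sides. The Cauchy product of formal power series produces exactly the sum over $m_1+\cdots+m_n=m$ with $m_i\ge0$, which is \eqref{1e- new-id-hm}. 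Since each coefficient of $t^m$ is a finite sum, no convergence issue arises.

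For \eqref{1e- new-id-em} we argue the same way with the polynomials
$$
E_{d_1+\cdots+d_n}(t;(\vec v_1,\dots,\vec v_n))=\prod_{i=1}^n E_{d_i}(t;\vec v_i),\qquad E_{d_i}(t;\vec v_i)=\sum_{m_i=0}^{d_i}\E_{m_i}(\vec v_i)t^{m_i}.
$$
Extracting the coefficient of $t^m$ gives the sum over $m_1+\cdots+m_n=m$ with $0\le m_i\le d_i$. The upper bound $m_i\le d_i$ comes out automatically, since $\E_{m_i}(\vec v_i)=0$ for $m_i>d_i$ by definition. The case $m>\sum_i d_i$ needs no separate treatment: both sides vanish, because the left side is zero by definition and the right side is an empty sum.

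If one prefers to avoid generating functions, an alternative is induction on $n$. The case $n=2$ is the combinatorial statement that every multiset (respectively, set) of indices drawn from the union splits uniquely into its parts coming from $\vec v_1$ and from $\vec v_2$. There is no genuine obstacle in either route. The only points needing a little care are to interpret \eqref{genH} formally, so that vectors with zero components or repeated components pose no problem, and to respect the convention $\H_0=\E_0=1$ when some $m_i=0$.
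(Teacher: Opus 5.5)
Your proposal is correct and takes essentially the paper's approach: both write the generating function of the concatenated vector as the product of the generating functions of the blocks, then read off the coefficient of $t^m$ (with the case $m>d_1+\cdots+d_n$ trivially giving $0=0$). The only difference is cosmetic: the paper extracts that coefficient with the Leibniz rule for $m$-th derivatives at $t=0$ in the case $n=2$ and then inducts on $n$, whereas you take the $n$-fold Cauchy product directly, which amounts to the same argument.
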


\begin{proof}  We prove only \eqref{1e- new-id-em}, since the proof of \eqref{1e- new-id-hm} is similar. If $m>d_1+\cdots+d_n$, then both sides of \eqref{1e- new-id-em}  vanish. So we can assume that $  m\le d_1+\cdots+d_n$. 
	
%We treat only \eqref{1e- new-id-em} because
	%the proof of \eqref{1e- new-id-hm} is similar.
%	First, note that we can assume that $m\leq d_1 +\ldots +d_n$ and $0\leq m_i\leq d_i$ for otherwise
	%	$\E_m (\vec v_1,\dots,\vec v_n)=0=\E_{m_i} (\vec v_i)$ for some $i$. 

Let us consider the case $n=2$. Given 

\centerline{$\vec v_1:=(x_1,\ldots,x_{d_1})\in\C^{d_1}$  and $\vec v_2:=(y_1,\ldots,y_{d_2})\in\C^{d_2}$, }

\noindent
	by using \eqref{e-gen-funct1} for any $m\in\N_0$ such that $m\leq d_1+d_2$, we see that
	$$
	\E_m (\vec v_1,\vec v_2)= \frac{1}{m!}\frac{{\rm d}^m}{{\rm d}t^m}
	\Big(E_{d_1}(t;\vec v_1)E_{d_2}(t;\vec v_2)\Big)_{\vert_{t=0}},
	$$
	where 
	$$
	E_{d_1}(t;\vec v_1):=\prod_{j=1}^{d_1} (1+tx_j),\quad E_{d_2}(t;\vec v_2):=\prod_{s=1}^{d_2} (1+ty_s).
	$$
	By applying the Leibniz formula and \eqref{e-gen-funct1}   we can write
	\begin{align*}
		\E_m (\vec v_1,\vec v_2)&=
		\frac{1}{m!}\sum_{r=0}^m {m\choose r}\frac{{\rm d}^{m-r}}{{\rm d}t^{m-r}}   E_{d_1}(t;\vec v_1)_{\vert_{t=0}}\frac{{\rm d}^r
		}{{\rm d}t^r}  E_{d_2}(t;\vec v_2)_{\vert_{t=0}}
		\\
		&=\sum_{r=0}^m \E_r(\vec v_1)\E_{m-r}(\vec v_2).
	\end{align*}
	Hence, \eqref{1e- new-id-em} follows in the case $n=2$ after recalling that
	$\E_r(\vec v_1)=0$ when $r>d_1$ and $\E_{m-r}(\vec v_2)=0$ when $m-r>d_2$.
	The general case  can be proven by induction on $n$.   
\end{proof}

\subsection{A cyclotomic formula for $\H_m$}

Here we recall the following identity implied by \cite[Cor.\ 1.5]{DEL}.

\begin{Lemma}\label{T-cyclic-1}
	Let  $J_k:=\{1,\zeta_n, \zeta_n^2, \ldots, \zeta_n^{k} \} \subseteq {\cal Z}_n$, with $k\in\N_0$. For every $  m\in\N_0$ we have that
	$$
	\hskip - 3mm\H_m(J_k)\! =\!\H_{\{m\}_n}(J_k) 
	\! =\!\begin{cases}  1 &  \mbox{   $k =0$ or $\{m\}_n=0$,}   %or $m'+k'=N$} 
	\cr\dsize\prod_{s=1}^{k   }\frac{1-\zeta_n ^{\{m\}_n +s }}{1-\zeta_n ^s } &\mbox{    $1\leq  k  +\{m\}_n< n $,}\cr  0&   \mbox{  $ k  +\{m\}_n\ge n $. }\cr\end{cases}
$$

\end{Lemma}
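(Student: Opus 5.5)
The plan is to work with the generating function \eqref{genH}. I will assume $0\le k\le n-1$, so that $J_k\subseteq{\cal Z}_n$ has $k+1$ distinct elements. Write $q:=\zeta_n$. The key observation is that $\prod_{s=0}^{n-1}(1-tq^s)=1-t^n$. Multiplying numerator and denominator of $H_{k+1}(t;J_k)=\prod_{s=0}^{k}(1-tq^s)^{-1}$ by the missing factors then gives the identity of formal power series
$$
\sum_{m\ge0}\H_m(J_k)t^m=\frac{P_k(t)}{1-t^n},\qquad P_k(t):=\prod_{s=k+1}^{n-1}(1-tq^s)=\sum_{r=0}^{n-1-k}c_rt^r .
$$
By \eqref{genE}, $c_r=(-1)^r\E_r(q^{k+1},\ldots,q^{n-1})$. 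Since $\deg P_k=n-1-k<n$, expanding $1/(1-t^n)=\sum_j t^{jn}$ and comparing coefficients gives $\H_m(J_k)=c_{\{m\}_n}$, where $c_r:=0$ for $r>n-1-k$. This single formula gives three things at once. First, it gives the periodicity $\H_m(J_k)=\H_{\{m\}_n}(J_k)$. Second, it gives vanishing when $\{m\}_n>n-1-k$, that is, when $k+\{m\}_n\ge n$. Third, it gives the value $1$ when $\{m\}_n=0$, since $c_0=1$. The case $k=0$ is trivial anyway, because $\H_m(1)=1$.

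It remains to evaluate $c_r$ for $1\le r\le n-1-k$ and show it equals $\prod_{s=1}^{k}\frac{1-q^{r+s}}{1-q^s}$. Put $N:=n-1-k$. The variables $q^{k+1},\ldots,q^{n-1}$ form the geometric progression $q^{k+1}(1,q,\ldots,q^{N-1})$. I will invoke the standard $q$-binomial identity $\E_r(1,q,\ldots,q^{N-1})=q^{r(r-1)/2}\binom{N}{r}_q$, which follows from the $q$-binomial theorem $\prod_{i=0}^{N-1}(1+tq^i)=\sum_r q^{r(r-1)/2}\binom Nr_q t^r$. This gives
$$
c_r=(-1)^rq^{(k+1)r+r(r-1)/2}\prod_{i=0}^{r-1}\frac{1-q^{N-i}}{1-q^{i+1}}.
$$

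Now use $q^n=1$ to rewrite each numerator: $1-q^{N-i}=1-q^{-(k+1+i)}=-q^{-(k+1+i)}(1-q^{k+1+i})$. The prefactors multiply to $(-1)^rq^{-(k+1)r-r(r-1)/2}$, which cancels exactly against the sign and the power of $q$ already present. What is left is
$$
c_r=\prod_{i=0}^{r-1}\frac{1-q^{k+1+i}}{1-q^{i+1}}=\binom{r+k}{k}_q=\prod_{s=1}^{k}\frac{1-q^{r+s}}{1-q^s}.
$$
Throughout, the denominators $1-q^j$ are nonzero because $1\le j\le n-1$.

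The only delicate step is this last bookkeeping. One has to make sure the exponent $(k+1)r+r(r-1)/2$ cancels exactly and that the ranges of $i$ and $s$ match when converting $\binom{r+k}{r}_q$ into the product over $s=1,\ldots,k$. This requires $r+k\le n-1$, so that no factor $1-q^{r+s}$ in the displayed formula is accidentally zero; that is precisely the hypothesis $1\le k+\{m\}_n<n$.
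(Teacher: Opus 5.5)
Your proof is correct. The paper gives no argument of its own here; it imports the lemma from \cite[Cor.~1.5]{DEL}, so your proof can serve as a self-contained replacement. It is worth seeing where it sits in the paper's chain of results.

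Your first step, $\sum_m\H_m(J_k)t^m=P_k(t)/(1-t^n)$ with $\deg P_k<n$, is in substance the proof of Theorem~\ref{T1-main} specialized to $J=J_k$. The difference is that you obtain $H_n(t;{\cal Z}_n)=1/(1-t^n)$ directly from $\prod_{s=0}^{n-1}(1-t\zeta_n^s)=1-t^n$. The paper's proof of Theorem~\ref{T1-main} instead takes \eqref{1e-spec-case} from \cite{DEL}, which is the case $k=n-1$ of this very lemma. So your argument yields the periodicity and the vanishing case with no $q$-series input, and the same two lines work for any nonempty $J\subseteq{\cal Z}_n$.

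Your second step runs the proof of Corollary~\ref{1P-Viete-1} in reverse. There the paper deduces $\E_m$ of a geometric progression from the lemma; you deduce the lemma from the $q$-binomial theorem for $\E_r(1,q,\dots,q^{N-1})$.

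A shorter route to the whole statement is the Gaussian-binomial formula for $\H$ itself, $\H_m(1,q,\dots,q^k)=\binom{m+k}{k}_q=\prod_{s=1}^{k}\frac{1-q^{m+s}}{1-q^s}$. It specializes to $q=\zeta_n$ for every $m$, because the only denominators are $1-\zeta_n^s$ with $1\le s\le k\le n-1$. Since $\zeta_n^{m+s}=\zeta_n^{\{m\}_n+s}$, and a factor $1-\zeta_n^n=0$ appears exactly when $k+\{m\}_n\ge n$, all three cases follow at once.

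Both routes use only that $\zeta_n$ is a primitive $n$th root of unity. They therefore extend to $\zeta_n^r$ with $r$ an integer prime to $n$. They do not extend to non-integral $r$, as Remark~\ref{generalT-cyclic-1} asserts: for $n=2$, $r=1/2$ one gets $\H_1(1,i)=1+i\neq 0$.

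One small correction to your last paragraph. The identity $\prod_{i=0}^{r-1}\frac{1-q^{k+1+i}}{1-q^{i+1}}=\prod_{s=1}^{k}\frac{1-q^{r+s}}{1-q^s}$ only needs the denominators to be nonzero, because multiplying both sides by them gives $\prod_{j=1}^{r+k}(1-q^j)$. A vanishing numerator would be harmless; it would just reproduce the zero case. In any case, $r+k\le n-1$ holds automatically for $r\le N$.
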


\begin{Rem}\label{generalT-cyclic-1}
Note that \eqref{1e-spec-case} follows by taking $J_k={\cal Z}_n$, i.e., $k=n-1$. However, upon closer scrutiny of \cite[Cor.\ 1.5]{DEL} (see also \cite[Thm.\ 1.4]{DEL}), it is easy to see that the   Lemma \ref{T-cyclic-1} holds when $J_k$ is replaced by any set of the type $\{1,\zeta_n^r, \zeta_n^{2r}, \ldots, \zeta_n^{kr} \}$, where $r$ is any fixed nonzero rational number for which  the powers $\zeta_n^{kr} $ are all well defined and $\zeta_n^{kr} \ne \zeta_n^{hr} $  when $0\leq h<k\leq n-1$. Thus,
\eqref{1e-spec-case} is a particular instance of the formula
\begin{equation}\label{general1e-spec-case}
	\H_m(1,\zeta_n^r, \zeta_n^{2r}, \ldots, \zeta_n^{r(n-1)})
	=\begin{cases} 1&  \mbox{if  $n$ divides $m$,} 
		\cr  0&   \mbox{otherwise. }\cr\end{cases} 
\end{equation}
\end{Rem}

\subsection{Generalization of the Vieta formulas}
We state a  generalization of the classical Vieta formulas \eqref{e-classVieta} established in \cite[Th.\ 1.1]{DEL}. It provides necessary conditions on the coefficients of a complex polynomial when some of its roots are known.

First, let us introduce some notation. When counted with their multiplicity, the  roots of
a polynomial form a multiset, whose definition  we have recalled in \S\ref{introduction}. Let $X_m:= \{x_1,\ldots, x_{m}\}$ be the multiset formed by the roots of the polynomial  given in
\eqref{polynomial}, so that the number ${\rm m}(x_j;X_m)$ of occurrences of $x_j$ in $X_m$
coincides with the multiplicity of $x_j$ as a root of 
$\P(z)$. 
In what follows,  we consider 
a   multi-subset $Y_n$ of $X_m$, that is a multiset  such that

\centerline{ $\{y\in\C: {\rm m}(y;Y_n)\not=0\}\subseteq \{y\in\C: {\rm m}(y;X_m)\not=0\}$ and 
${\rm m}(y;Y_n)\le {\rm m}(y;X_m)$. }

\begin{Prop}\label{T-Viete-3} Let
	$X_m$ be the  multiset of  all the  roots of
	the polynomial $\P(z)$ defined in \eqref{polynomial}.
	If  $Y_n$ is a multisubset of $X_m$, with $n:=|Y_n|$, 
	then
	$$
	\P(z)=  \big(c_{0}z^{m-n  }+c_{1}z^{n-m-1}+\ldots + c_{m-n-1 } z+c_{n-m  }\big){\cal Q}_n(z),
	$$
	where ${\cal Q}_n(z)$ is the monic polynomial whose multiset of roots is 
	$Y_n$ and
	\begin{equation}\label{e-ck1}    c_{k}=    \sum_{j=0}^k a_j \H_{k-j}(Y_n),\quad \mbox{  $k\in\{0,1,\ldots, m-n\}$}
	\end{equation}
\end{Prop}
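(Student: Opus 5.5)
Since $\P(z)=a_0\prod_{x\in X_m}(z-x)$ and $Y_n$ is a multisubset of $X_m$, the monic polynomial ${\cal Q}_n(z)=\prod_{y\in Y_n}(z-y)$ divides $\P(z)$. The quotient $\P(z)/{\cal Q}_n(z)$ is a polynomial $\sum_{k=0}^{m-n}c_kz^{m-n-k}$ of degree $m-n$. The only task is to identify the coefficients $c_k$. The plan is to pass to reversed polynomials, so that the division becomes multiplication of power series in a variable $t$ near $0$. The generating function \eqref{genH} then produces the $\H$'s directly.

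Concretely, substitute $z=1/t$ and multiply by $t^m$. This gives
$$
t^m\P(1/t)=\sum_{j=0}^m a_jt^j,\qquad t^n{\cal Q}_n(1/t)=\prod_{y\in Y_n}(1-ty).
$$
By \eqref{genH}, the reciprocal of the latter is the formal power series $H_n(t;Y_n)=\sum_{r\ge0}\H_r(Y_n)t^r$. Likewise $t^{m-n}\sum_k c_k t^{-(m-n-k)}=\sum_{k=0}^{m-n}c_kt^k$. The factorization $\P={\cal Q}_n\cdot(\text{quotient})$ therefore becomes the identity
$$
\sum_{k=0}^{m-n}c_kt^k=\Big(\sum_{j=0}^m a_jt^j\Big)\Big(\sum_{r\ge0}\H_r(Y_n)t^r\Big)
$$
in $\C[[t]]$. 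This is legitimate because $\prod_{y\in Y_n}(1-ty)$ has constant term $1$ and is hence invertible in $\C[[t]]$.

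Comparing the coefficients of $t^k$ for $0\le k\le m-n$ yields $c_k=\sum_{j=0}^k a_j\H_{k-j}(Y_n)$, which is \eqref{e-ck1}. As a byproduct, the coefficients of $t^k$ for $k>m-n$ vanish; these are the necessary conditions the proposition alludes to. Alternatively, one could argue by induction on $n$, dividing by one linear factor $z-y$ at a time via synthetic division and using the recursion $\H_k(Y\cup\{y\})=\sum_i y^i\H_{k-i}(Y)$, which is \eqref{1e- new-id-hm}. The power series route avoids this bookkeeping.

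There is essentially no obstacle here. The only points needing care are to justify that exact divisibility holds, using multiplicities in the multiset, and to check that the reversal exponents match, so that the coefficient $c_k$ of $z^{m-n-k}$ corresponds to $t^k$. The displayed formula in the statement has typographical index slips ($z^{n-m-1}$, $c_{n-m}$), which I would read as $z^{m-n-1}$ and $c_{m-n}$.
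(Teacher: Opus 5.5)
Your proof is correct and complete. You obtain divisibility of $\mathcal{P}$ by $\mathcal{Q}_n$ from the multiplicity condition, you check that the substitution $z=1/t$ matches the exponents, and inverting $\prod_{y\in Y_n}(1-ty)$ in $\C[[t]]$ via \eqref{genH} and comparing coefficients of $t^k$ gives \eqref{e-ck1}. Your reading of the index slips $z^{n-m-1}$, $c_{n-m}$ as $z^{m-n-1}$, $c_{m-n}$ is the intended one.

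The paper does not prove this proposition; it imports it from \cite[Th.\ 1.1]{DEL}, so there is no proof here to compare against line by line. Your argument is, however, the same generating-function device the paper uses for Theorem \ref{T1-main}. There, $H_s(t;J)=H_n(t;{\cal Z}_n)E_{n-s}(-t;{\cal Z}_n\setminus J)$ is expanded and coefficients are extracted with the Leibniz rule, which is just your Cauchy-product comparison written with derivatives.

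Your formulation also makes two things visible. First, $c_k=a_0(-1)^k\E_k(X_m\setminus Y_n)$ (multiset difference), so the proposition is Vieta's formula for the quotient, rewritten in terms of the $a_j$ and the known roots $Y_n$ only. Second, it gives for free the vanishing $\sum_{j=0}^{\min\{k,m\}}a_j\H_{k-j}(Y_n)=0$ for every $k>m-n$. These are the necessary conditions alluded to just before the statement.
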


 \section{Proofs of the results}\label{Ths}

 \begin{proof}[Proof of Theorem \ref{T1-main}] 
 	Let us  denote $s:=|J|\in\N$. 
 If $n=1$, then $J={\cal Z}_n=\{1\}$ and \eqref{1main} is trivially true because of \eqref{convention1}. Let us consider the case $n\ge 2$. The generating function of $\H_m(J)$, $m \in\N_0$, can be written as (see \S\ref{preliminaries})
 	\begin{align*}
 		H_s(t;J)&=\prod_{y\in J} \frac{1} {1-ty }=\prod_{y\in {\cal Z}_n} (1-ty)^{-1}\prod_{y\in {\cal Z}_n\setminus J} (1-ty)\\
 		&=H_n(t; {\cal Z}_n)
 		E_{n-s}(-t; {\cal Z}_n\setminus J),
 	\end{align*}
 	where $E_{n-s}(t,  {\cal Z}_n\setminus J )$ is the generating function of
 	$\E_r({\cal Z}_n\setminus J )$, $r\in\N_0$.
 	
 	Therefore, by applying \eqref{e-gen-funct1} and the Leibniz formula   we see that
 	\begin{align}\label{pre1main}
 		\H_m(J)&=
 		\frac{1}{m!}\frac{{\rm d}^m}{{\rm d}t^m}
 		\Big( H_n(t;{\cal Z}_n)
 		E_{n-s}(-t;{\cal Z}_n\setminus J)\Big)_{\vert_{t=0}}\nonumber\\
 		&=
 		\frac{1}{m!}\sum_{r=0}^m {m\choose r}\frac{{\rm d}^{m-r}}{{\rm d}t^{m-r}}   H_n(t;{\cal Z}_n)_{\vert_{t=0}}\frac{{\rm d}^r
 		}{{\rm d}t^r}  E_{n-s}(-t;{\cal Z}_n\setminus J)_{\vert_{t=0}}
 		\nonumber\\
 		&=\sum_{r=0}^m \frac{1}{(m-r)!r!}
 		\frac{{\rm d}^{m-r}}{{\rm d}t^{m-r}}   H_n(t;{\cal Z}_n)_{\vert_{t=0}}\frac{{\rm d}^r
 		}{{\rm d}t^r}  E_{n-s}(-t;{\cal Z}_n\setminus J)_{\vert_{t=0}}
 		\nonumber\\
 		&=\sum_{r=0}^{m} (-1)^r\E_r({\cal Z}_n\setminus J )\H_{m-r}({\cal Z}_n).
 	\end{align}
 	By using  \eqref{1e-spec-case} this reduces to
 	$$
 		\H_m(J)=\sum_{r=0\atop r\equiv m \Mod n}^{m} (-1)^r\E_r({\cal Z}_n\setminus J).
 	$$
 Hence, \eqref{1main}  follows for $\E_r({\cal Z}_n\setminus J)=0$ when $r>|{\cal Z}_n\setminus J|$, and  the condition $r\equiv m \Mod n$, with $
 	0\le r\le |{\cal Z}_n\setminus J|<n$, is satisfied only if $r=\{r\}_n=\{m\}_n$.
 \end{proof} 
  
 \begin{Rem} Note that \eqref{pre1main} for $J=\emptyset$ gives a  particular instance of the
 	well known 
 	identity (see \cite{Mcdonald})
 	$$
 	\sum_{j=0}^m(-1)^j{\cal E}_j(z_1,\ldots, z_n){\cal H}_{m-j}(z_1,\ldots, z_n)=0.
 	$$
 \end{Rem}

 	 	\begin{proof}[Proof of Corollary \ref{C2}] (1) 	In view of Theorem \ref{T1-main},  we have that 
 	 		$\H_j(J)=(-1)^j \E_j({\cal  Z}_n\setminus J)$ for every $\{j\}_n=j\leq n-1$. In particular, for $s:=|J|\in\{1,2,\ldots,n-1\}$ this yields
 	 		$$
 	 		\H_{n-s}(J)=(-1)^{n-s}\E_{n-s}({\cal Z}_n\setminus J)=
 	 		(-1)^{n-s}\prod_{\zeta_n^r\in{\cal Z}_n\setminus J}\zeta_n^r\not=0.
 	 		$$
 	 		Therefore, if $s=1$, then we immediately see that ${\rm deg}\cal Q(z)=n-1$. If $s 
 	 	\in\{2,\ldots,n-1\}$, then ${\rm deg}\cal Q(z)=n-s$ follows from the fact that $\E_j({\cal  Z}_n\setminus J)=\H_j(J)=0$ whenever $n-1\ge j >n-s= |{\cal  Z}_n\setminus J|$.
 	 	
 	 	(2) After noticing that $\cal Q(0)=\H_0(J)=1$,  we can take $x\ne 0$ and 	use the   Vieta formulas \eqref{e-classVieta} to write 
 	 	\begin{equation}\label{1-genpol}
 	 		x^{n-s}\cal Q(x^{-1})=\sum_{j=0}^{n-s} (-1)^j \E_j({\cal  Z}_n\setminus J)x^ {n-s-j}= \prod_{\zeta_n^r\in \cal Z_n\setminus J} (x-\zeta_n^r).
 	 	\end{equation}
 	 	Hence,  $\cal Q(x^{-1})=0$ if and only if  $x=\overline x^{-1}=\zeta_n^r$ for some $\zeta_n^r\in \cal Z_n\setminus J$, that is equivalent to the claimed property.  
 	 \end{proof}
 	 
 	 \begin{proof}[Proof of Corollary \ref{C3}]  (1) First, notice that
 	 	$\ov x\in \cal Z_n\setminus J$ yields $x\not=0$. Further, by applying  \eqref{1e- new-id-hm} of Lemma \ref{1L-new-id-hm},  
 	 	we see that
 	 	$$
 	 	\H_{k}(J,x)=	\sum_{j=0}^{k }\H_{  j}(J) x^{ k-j}\ \hbox{for every integer}\ k\in\N_0.
 	 	$$
 	 	Since  we can assume that $x\not=0$, this can be written as
 	 	$
 	 	\H_{k}(J,x)=x^{k} \cal Q(x^{-1})$,  with $k\in[n-s,n)$, where $\cal Q(z)$ is the polynomial considered in Corollary \ref{C2}. 
 	 	The conclusion follows from \eqref{1-genpol} for $
 	 	\H_{k}(J,x)=x^{k} \cal Q(x^{-1})=
 	 	x^{k-n+s}\big(x^{n-s}\cal Q(x^{-1})\big)$, with $k\in[n-s,n)$.

 	 	(2) It suffices to proceed by induction on $r$, where (1) provides  the base induction case for $r=1$. 
 	 	\end{proof}
 	 
 \begin{proof}[Proof of Corollary \ref{1P-Viete-1}] 
 	First, observe that $\{m\}_n=m$ for $0\le m\leq n-1$. Then,
 	by applying Theorem  \ref{T1-main} we see that
 	\begin{align*}
 		\E_m (1, \zeta_n,\ldots,\zeta_n^{k-1})&=  (-1)^m\H_m(\zeta_n^k,\dots,\zeta_n^{n-1})\\
 		&= (-1)^m \zeta_n^{mk}\H_m(1, \zeta_n,\ldots, \zeta_n^{n-k-1}).
 	\end{align*}
 	If $k=n-1$, then this reduces to 
 	$$
 		\E_m (1, \zeta_n,\ldots,\zeta_n^{n-2})=(-1)^m \zeta_n^{m(n-1)}\H_m(1)=(-1)^m \zeta_n^{-m}.
 	$$
 	If $k\le n-2$, then 
 	 we apply Lemma \ref{T-cyclic-1} to get  \eqref{T-cyclic-1E}   proved. 
  \end{proof}
  
  \begin{proof}[Proof of Corollary \ref{C4}] Since $\{m\}_n=m$ when $0\le m<n$, the identity \eqref{eqC4} is equivalent to \eqref{1mainempty} in this case. 
  
  Let us assume that $n\le m$. 
  If $n<m$, then $\E_m(J)=0$. Thus, in view of \eqref{convention1}, the identity \eqref{eqC4} is obviously true 
  when $J={\cal Z}_n$.
  On the other hand, if $J\not={\cal Z}_n$, then from \eqref{periodic} one has
  	$\H_{m}({\cal Z}_n\setminus J)=\H_{\{m\}_n}({\cal Z}_n\setminus J)$
  and $\H_{m-n}({\cal Z}_n\setminus J)=
  \H_{\{m-n\}_n}({\cal Z}_n\setminus J)$. Consequently, $\H_{m}({\cal Z}_n\setminus J)=\H_{m-n}({\cal Z}_n\setminus J)$, because
  $\{m\}_n=\{m-n\}_n$ for $n<m$. Hence, \eqref{eqC4} is true also in this case. 
  
  Let us consider the remaining case $n=m\in\N$. If $J={\cal Z}_n$, then 
  \eqref{eqC4} follows after noticing that
  \eqref{e-classVietaz^n-1} yields $\E_m({\cal Z}_n)=\E_n({\cal Z}_n)=(-1)^{n+1}$, while $\H_{m}({\cal Z}_n\setminus J)=
  \H_n(\emptyset)=0$ and $\H_{m-n}({\cal Z}_n\setminus J)=\H_0(\emptyset)=1$ because of  \eqref{convention1}. 
  If $J\subset{\cal Z}_n$, i.e., ${\cal Z}_n\setminus J\not=\emptyset$, then  $\E_m(J)=\E_n(J)=0$ and
  $\H_{m}({\cal Z}_n\setminus J)=\H_{\{m\}_n}({\cal Z}_n\setminus J)=\H_0({\cal Z}_n\setminus J)=\H_{m-n}({\cal Z}_n\setminus J)=1$,
 yielding \eqref{eqC4} also in this last case.

 The corollary 	is   proved.
 \end{proof}

 	 \begin{proof}[Proof of Theorem \ref{1T-h-prod-1-clean}]
 	 	By using  the identity \eqref{1e- new-id-hm} of Lemma \ref{1L-new-id-hm}   we write
 	 	\begin{align*}
 	 		\H_m(\vec\xi\otimes \vec\zeta_n)
 	 		&=\H_m(\xi_0\vec\zeta_n,\ldots,\xi_{k-1}\vec\zeta_n)\\
 	 		&=\sum_{j_0+\cdots+j_{k-1}=m\atop j_t\ge 0}\H_{j_0}(\xi_0\vec\zeta_n)\cdots \H_{j_{k-1}}(\xi_{k-1}\vec\zeta_n)\\
 	 		&=\sum_{j_0+\cdots+j_{k-1}=m\atop j_t\ge 0}\xi_0^{j_0}\cdots\xi_{k-1}^{j_{k-1}}\,
 	 		\H_{j_0}(\vec\zeta_n)\cdots \H_{j_{k-1}}(\vec\zeta_n).
 	 	\end{align*}
 	 	In view of \eqref{1e-spec-case},
 	 	the product $\H_{j_0}(\vec\zeta_n)\cdots \H_{j_{k-1}}(\vec\zeta_n)$ vanishes unless each $j_t$ is a multiple of $n$, with $t=0,1,\ldots,k-1$.
 	 	In other words, the whole sum is $0$ if $n$ does not divide $m$. Now, assuming that $j_t=nj_t'$ for each $t=0,1,\ldots,k-1$, by \eqref{1e-spec-case} we infer
 	 	\begin{align*}
 	 		\H_m(\vec\xi\otimes \vec\zeta_n)
 	 		&=\sum_{j_0'+\cdots+j_{k-1}'=m/n\atop j'_t\ge 0}\xi_0^{nj_0'}\cdots\xi_{k-1}^{nj_{k-1}'}\\
 	 		&=\H_{m/n}(\xi_0^n,\ldots,\xi_{k-1}^n)
 	 		=\H_{m/n}(\vec\xi^{\, n}).
 	 	\end{align*}
 	 	Hence, \eqref{1e-spec-case1} is proved. 
 	 	\smallskip
 	 
 	 Let us turn our attention to  \eqref{1e-spec-case2}. First, observe that it is trivially true when $m>kn$. Thus, assuming that $m\le kn$ and arguing as before
 	 we can apply \eqref{1e- new-id-em} to get
 	 	$$
 	 	\E_m(\vec\xi\otimes\vec\zeta_n)
 	 	=
 	 	\sum_{j_0+\cdots+j_{k-1}=m \atop{0\le j_t\leq n}}
 	 	\xi_0^{j_0}\cdots\xi_{k-1}^{j_{k-1}}\,
 	 	\E_{j_0}(\vec\zeta_n)\cdots\E_{j_{k-1}}(\vec\zeta_n).
 		$$
 	 	Now, from \eqref{e-classVietaz^n-1}  it follows that
 	 	 the above sum is $0$ when $m$ is not a multiple of $n$. On the other hand, if $n$ divides $m\le kn$, then the contribution of each $\E_{j_t}(\vec\zeta_n)$ to the above sum is zero unless $j_t=0$ or $j_t=n$. If this is the case, then
 	 	 exactly $m/n$ of the integers
 	 	$j_t$ must be equal to $n$, while the remaining $k-m/n$ must be equal to $0$. Thus, when $n$ divides $m$, in view of \eqref{e-classVietaz^n-1} we can write 	 	
 	 	$$
 	 		\E_m(\vec\xi\otimes\vec\zeta_n)
 	 		=\sum_{\substack{S\subseteq\{0,\ldots,k-1\}\\ |S|=m/n}}
 	 		\prod_{j\in S}\big(\xi_j^{n} (-1)^{n-1}\big)=(-1)^{m-m/n}
 	 		\sum_{\substack{S\subseteq\{0,\ldots,k-1\}\\ |S|=m/n}}
 	 		\prod_{j\in S} \xi_j^{n}.
 	 		$$
 	 		Hence, \eqref{1e-spec-case2} follows because by definition
 	 		$$
 	 		\E_{m/n}(\vec\xi^{\,n})=
 	 		\E_{m/n}(\xi_0^n,\ldots,\xi_{k-1}^n)
 	 		=\sum_{\substack{S\subseteq\{0,\ldots,k-1\}\\ |S|=m/n}}
 	 		\prod_{j\in S} \xi_j^{n}.
 	 	$$
 	 	The theorem is   % completely
 	 	proved.
 	 	\end{proof}
 	 	 \begin{proof}[Proof of Corollary \ref{1T-h-prod-1-cleanCor}]
Let us consider the generating function of $\H_m(\vec\xi\otimes \vec\zeta_n')$ and write
\begin{align*}
	H_{k(n-1)}(t;\vec\xi\otimes \vec\zeta_n')&=
	\prod_{r=0 }^{k-1}\prod_{j=1 }^{n-1}
	\frac{1} {1-t\xi_r\zeta_n^j}\\
	&=\prod_{s=0 }^{k-1}(1-t\xi_s)
		\prod_{r=0 }^{k-1}\prod_{j=0 }^{n-1}
	\frac{1} {1-t\xi_r\zeta_n^j}\\
	&=E_k(-t;\vec\xi\,)H_{kn}(t;\vec\xi\otimes \vec\zeta_n).
\end{align*}
Thus, the identity \eqref{e-gen-funct1} and the Leibniz formula  yield
\begin{align*}
	\H_m(\vec\xi\otimes \vec\zeta_n')&=\frac{1}{m!}\frac{{\rm d}^m}{{\rm d}t^m}\left( E_k(-t;\vec\xi\,)H_{kn}(t;\vec\xi\otimes \vec\zeta_n)\right)_{\vert_{t=0}}\\
	&=\frac{1}{m!}	\sum_{j=0}^m{m\choose j}
	\frac{{\rm d}^j}{{\rm d}t^j}H_{kn}(t;\vec\xi\otimes \vec\zeta_n)_{\vert_{t=0}}\frac{{\rm d}^{m-j}}{{\rm d}t^{m-j}} E_k(-t;\vec\xi\,)_{\vert_{t=0}}\\
	&=\sum_{j=0}^m(-1)^{m-j}\E_{m-j}(\vec\xi\,)\H_{j}(\vec\xi\otimes \vec\zeta_ n)\\
	&=\sum_{j=0\atop j\equiv 0\Mod n}^m(-1)^{m-j}\E_{m-j}(\vec\xi\,)\H_{j/n}(\vec\xi^{\,n}),
\end{align*}
after applying \eqref{1e-spec-case1}. Hence, \eqref{e-spec-case2} is proved.
\smallskip

Similarly, the generating function of $\E_m(\vec\xi\otimes \vec\zeta_n')$ is
$$
E_{k(n-1)}(t;\vec\xi\otimes \vec\zeta_n')=  \prod_{i=0 }^{k-1}\prod_{j=1 }^{n-1}  (1+t\zeta_n^j\xi_i)=
H_k(-t;\vec\xi\,)E_{kn}(t;\vec\xi\otimes \vec\zeta_n).
$$ 
As before, from \eqref{e-gen-funct1} and the Leibniz formula we get
\begin{align*}
	\E_m(\vec\xi\otimes \vec\zeta_n')&=\sum_{j=0}^m(-1)^{m-j}\H_{m-j}(\vec\xi\,)\E_{j}(\vec\xi\otimes \vec\zeta_ n)\\
	&=\sum_{j=0}^M(-1)^{m-j}\H_{m-j}(\vec\xi\,)\E_{j}(\vec\xi\otimes \vec\zeta_ n)\\
	&=\sum_{j=0\atop j\equiv 0\Mod n}^M(-1)^{m-j/n}\H_{m-j}(\vec\xi\,)\E_{j/n}(\vec\xi^{\,n}),
\end{align*}
where we have used the fact that $\E_{j}(\vec\xi\otimes \vec\zeta_ n)=0$ when $j>kn$ and we have applied \eqref{1e-spec-case2}.

Hence, \eqref{1e-em-zetaprime} follows immediately because
$M=m$ when $m\le k(n-1)$,  while
to obtain \eqref{1e-em-zetaprime*} it suffices to recall that 
$\E_m(\vec\xi\otimes \vec\zeta_n')=0$ when $m>k(n-1)$.

The corollary is proved.
\end{proof}

 \begin{proof}[Proof of Theorem \ref{T-cross-prod-2Th}] (1) We prove only \eqref{newAMdiv2} because \eqref{newAMdiv1} can be proved in  a completely analogous way by using \eqref{basiccyclo} as in \cite[Th.\ 1.3]{DL}.
 	To this end, let us
 	 write $m':=m-\varphi(m)$ and 
 	\begin{align*}
 	\Psi_{m}(z^p)&= \sum_{k=0}^{m'}b_m(k) z^{p(m'-k)}\\
 	&=
 	\sum_{k=0\atop{k\equiv 0 \Mod p}}^{pm'}b_m(k/p) z^{p m'-k }=
 	\sum_{k=0}^{pm'}c_m(k) z^{p m'-k},
 	\end{align*}
 	where 
 	$ 
 	c_m(k):=
 	\begin{cases} b_m(k/p) & \mbox{if $p\vert k$}
 		\cr 0& \mbox{otherwise.}\cr\end{cases}
 	$ 
 	
 	\noindent
 	If $p|m$,  then $\varphi(mp)= p\varphi(m)$ and \eqref{newAMdiv2} follows from \eqref{basicinvcyclo}.
 	
 	(2)  By applying Proposition \ref{T-Viete-3}, together with \eqref{basiccyclo}, in \cite[Th.\ 1.3]{DL}
 	we have proved that   if $p\in\Primes$
 	does not divide $m$, then
 	$$
 		a_{mp}(k)=\sum_{j=0}^{[k/p]}a_{m }(j)\H_{k-jp}(\vec\zeta^*_{m})
 	$$
 	for every $k\in\{0,1,\ldots,(p-1)\varphi(m)\}$. Therefore, \eqref{1e-em-cycl} follows in view of \eqref{P-coeff-rec-pol}.

 	In order to prove \eqref{newAM2}, note that if $p$  does not divide $m$, then $\varphi(mp)= (p-1)\varphi(m)$ and
 	\eqref{basicinvcyclo} yields $\Psi_{mp}(z)=\Psi_m(z^p) \Phi_m(z)$. We can apply Proposition \ref{T-Viete-3}  to get
 	$$  
 	\sum_{ j =0}^{k} b_{mp}(j)\H_{k-j}(\vec\zeta_m^*)=c_m(k):=
 	\begin{cases} b_m(k/p) & \mbox{if $p\vert k$}
 		\cr 0& \mbox{otherwise,}\cr\end{cases} 
 	$$
 	for any integer  $k\in
 	[0,{\deg}\Psi_{mp}(z)-{\deg}\Phi_{m}(z)]=[0,mp-p\varphi(m)]$.
 	Hence, 
 	\eqref{newAM2} follows by using \eqref{P-coeff-rec-pol}.
 \end{proof}
 
 \begin{proof}[Proof of Proposition \ref{L-Kron2}] First,
 	let us assume that $k=1$ and show that 
 	$ \vec\zeta_n^*=\vec\zeta_{p_1}^*\otimes\cdots\otimes\vec\zeta_{p_t}^*$, i.e.,
 	the roots of $\Phi_n(z)$ are the nonzero components of $\vec\zeta_{p_1}^*\otimes\cdots\otimes \vec\zeta_{p_t}^*$.
 	
 	To this end, let us take a root $ \zeta_n^s$ of 
 	$\Phi_n(z)$, so that ${\rm gcd}(n, s)=1$. Clearly, if $s_j$ is the remainder of the division of $s$ by $p_j$ for $j=1,\ldots, t$, then ${\rm gcd}(s_j,p_j)=1$. By 
 	the Chinese remainder theorem we see that
 	$s\equiv \sum_{j=1}^ts_j\frac{n}{p_j}n'_j \Mod n$, where 
 	$n'_j$ is the inverse of $n/p_j \Mod{p_j}$, i.e., $n'_jn/p_j\equiv 1 \Mod{p_j}$. Thus,
 	we can write 
 	$
 	\zeta_n^s
 	=
 	\zeta_{p_1}^{s_1n'_1}\cdots\zeta_{p_t}^{s_tn'_t}
 	$,
 	with $(s_jn'_j,p_j)=1$, revealing that $\zeta_n^s$ is a component of $\vec\zeta_{p_1}^*\otimes\cdots\otimes \vec\zeta_{p_t}^*$, as required.
 	
 	Now, let us consider the case $k\ge 2$.  Since $\Phi_n(x^k)=0$ if and only if  $x^k\in\mathcal Z_n^*$, it suffices to observe that for any given $\zeta_n^s\in {\cal Z}_n^*$
 	the $k$ solutions of
 	$z^k=\zeta_n^s$ 
 are 
 	$
 	\zeta_{n }^{s/k}\zeta_k^h$, with $h=0,\ldots,k-1$. Indeed, for what we have seen in the case $k=1$, it turns out that $
 	\zeta_{n }^{s/k}$ is a component of the vector 
 	$(\vec\zeta_{p_1}^*\otimes\cdots\otimes \vec\zeta_{p_t}^*)^{1/k}=(\vec\zeta_{p_1}^*)^{1/k}\otimes\cdots\otimes (\vec\zeta_{p_t}^*)^{1/k}$, while $\zeta_k^h$ is a component of $\vec\zeta_k$. Hence, 	$
 	\zeta_{n }^{s/k}\zeta_k^h$, with $h=0,\ldots,k-1$, are the components of $\vec\zeta_k\otimes(\vec\zeta_{p_1}^*)^{1/k}\otimes\cdots\otimes (\vec\zeta_{p_t}^*)^{1/k}$,
 	as claimed.
 \end{proof}
 
 \begin{proof}[Proof of Corollary \ref{example}]  
 	First, note that  $(\vec\zeta_{p_1}^{*})^{p_2}=\vec\zeta_{p_1}^{*}$ and
 	$(\vec\zeta_{p_1p_2}^{*})^{p_3}=(\vec\zeta_{p_1}^{*}\otimes
 	\vec\zeta_{p_2}^{*})^{p_3}=\vec\zeta_{p_1}^{*}\otimes
 	\vec\zeta_{p_2}^{*}$ by Proposition \ref{L-Kron2}. Further,
 	Theorem \ref{T1-main} (see also \eqref{1e-spec-case-1} and Remark \ref{general1T-h-prod-1-clean}) yields
 	$$
 	\H_j(\vec\zeta_{p_1}^{*})
 	= \mathbf 1_{\{0\}}(\{j\}_{p_1})
 	-
 	\mathbf 1_{\{1\}}(\{j\}_{p_1})=
 	\begin{cases}
 		1,
 		&
 		\text{if $p_1|j$},
 		\\
 		-1,
 		&
 		\text{if $p_1|j-1$},
 		\\
 		0,&\text{otherwise},
 	\end{cases}
 	$$
 	$$
 	\E_j(\vec\zeta_{p_1}^{*})
 	=	(-1)^j\mathbf 1_{[0,p_1-1]}(j)=
 	\begin{cases}
 		(-1)^j,&\text{if $0\le j\le p_1-1$},\\
 		0,&\text{otherwise}.
 	\end{cases}
 	$$
 	Now, by applying 
 	Corollary
 	\ref{1T-h-prod-1-cleanCor} and the latter formula we can write
 	\begin{align}
 		\H_m(\vec\zeta_{p_1p_2}^{*})=\H_m(\vec\zeta_{p_1}^{*}\otimes
 		\vec\zeta_{p_2}^{*})&=	\sum_{j=0}^{[m/p_2]}
 		(-1)^{m-jp_2}
 		\E_{m-jp_2}(\vec\zeta_{p_1}^{*})
 		\H_j(\vec\zeta_{p_1}^{*})\nonumber\\ \label{h-E2primes}
 		&=	\sum_{j=0}^{[m/p_2]}
 		\mathbf 1_{[0,p_1-1]}(m-jp_2)
 		\H_j(\vec\zeta_{p_1}^{*}).
 	\end{align} 
 	This yields \eqref{e-H2primes} because $p_2>p_1$ implies that the unique integer $j\in\big[0,[m/p_2]\big]$ such that
 	$0\le m-jp_2\le p_1$, i.e., $ 
 	0\le m/p_2-j\le p_1/p_2<1
 	$, is
 	$ 
 	j=[m/p_2]
 	$, so that $m-jp_2=m-[m/p_2]p_2=\{m\}_{p_2}$.
 	
 	Similarly, from 	Corollary
 	\ref{1T-h-prod-1-cleanCor} we see that if $m\le \varphi(p_1p_2)=(p_1-1)(p_2-1)$, then $[m/p_2]<p_1-1$ and
 	\begin{align*}
 		\E_m(\vec\zeta_{p_1p_2}^{*})&=\E_m(\vec\zeta_{p_1}^{*}\otimes
 		\vec\zeta_{p_2}^{*})\\
 		&=
 		(-1)^m
 		\sum_{j=0}^{[m/p_2]}
 		\left(
 		\mathbf 1_{\{0\}}(\{m-jp_2\}_{p_1})
 		-
 		\mathbf 1_{\{1\}}(\{m-jp_2\}_{p_1})
 		\right).
 	\end{align*}
  	Therefore, \eqref{e-E2primes} follows because 
 	$b_{0,m}$ and $b_{1,m}$  are the unique solutions in $\{0,1, \ldots,  p_1-1\}$ of the congruences
 	$ 
 	p_2b_{0,m}\equiv m\pmod {p_1}$ and $
 	p_2b_{1,m}\equiv m-1\pmod {p_1},
 	$ 
 	respectively. 
 	
 	In order to prove \eqref{e-H3-explicit}, we apply
 	Corollary
 	\ref{1T-h-prod-1-cleanCor} again to write
 	\begin{equation*}\label{e-H3pr}
 		\H_m(\vec\zeta_{p_1p_2p_3}^{*})
 		=
 		\sum_{j=0}^{[m/p_3]}
 		(-1)^{m-jp_3}
 		\E_{m-jp_3}(\vec\zeta_{p_1}^{*}\otimes
 		\vec\zeta_{p_2}^{*})\H_j(\vec\zeta_{p_1}^{*}\otimes
 		\vec\zeta_{p_2}^{*}),
 		\end{equation*}
 	where 
 	$$
 	(-1)^{m-jp_3}\E_{m-jp_3}(\vec\zeta_{p_1}^{*}\otimes
 	\vec\zeta_{p_2}^{*})=
 	\begin{cases} 0&  \mbox{if  $j\in\left[0,\left[\frac{m}{p_3}\right]\right]
 			\cap\left[0,\frac{m-\varphi(p_1p_2)}{p_3}\right)$} 
 		\cr  \Delta_{m-jp_3}& \mbox{if  $j\in I$.}\cr
 	\end{cases} 
 	$$
 Thus, \eqref{e-H3-explicit} follows by using 
  \eqref{e-H2primes}.
  
 Analogously, to show \eqref{e-E3-explicit} we see that
 	\begin{equation}\label{e-E3pr}
 		\E_m(\vec\zeta_{p_1p_2p_3}^{*})
 			=
 		\sum_{j=0}^{[ m/p_3]}
 		(-1)^{m-j}
 		\H_{m-jp_3}(\vec\zeta_{p_1}^{*}\otimes
 		\vec\zeta_{p_2}^{*})\E_j(\vec\zeta_{p_1}^{*}\otimes
 		\vec\zeta_{p_2}^{*}),
 	\end{equation}
 where $m\le (p_1-1)(p_2-1)(p_3-1)$ yields $[m/p_3]<(p_1-1)(p_2-1)$.
 	Hence, \eqref{e-E3-explicit} follows 
 	by applying the  identities \eqref{e-H2primes} and  \eqref{e-E2primes}.
 	
 	The corollary is completely proved.
 \end{proof}
 \begin{Rem}\label{ternaryflat}
 Taking $m=p_3$ in \eqref{e-E3pr} gives 
 	$$
 	\E_{p_3}(\vec\zeta_{p_1p_2p_3}^{*})
 	=\E_1(\vec\zeta_{p_1}^{*}\otimes
 	\vec\zeta_{p_2}^{*})-\H_{p_3}(\vec\zeta_{p_1}^{*}\otimes
 	\vec\zeta_{p_2}^{*}),
 	$$
 	where \eqref{e-H2primes} and \eqref{e-E2primes} respectively yield
 		$$
 	\H_{p_3}(\vec\zeta_{p_1}^{*}\otimes
 	\vec\zeta_{p_2}^{*})
 	=
 	\mathbf 1_{[0,p_1-1]}(\{p_3\}_{p_2})
 	\H_{[p_3/p_2]}(\vec\zeta_{p_1}^{*}),
 	$$
 	$$\E_1(\vec\zeta_{p_1}^{*}\otimes
 	\vec\zeta_{p_2}^{*})=-\Delta_1=
 	\mathbf 1_{\{0\}}\bigl(b_{1,1}\bigr)-
 	\mathbf 1_{\{0\}}\bigl(b_{0,1}\bigr)=	\mathbf 1_{\{0\}}\bigl(b_{1,1}\bigr)=1
 	$$
 	Consequently,
 	$ 
 	\E_{p_3}(\vec\zeta_{p_1p_2p_3}^{*})=	1-
 	\mathbf 1_{[0,p_1-1]}(\{p_3\}_{p_2})
 	\H_{[p_3/p_2]}(\vec\zeta_{p_1}^{*})\in\{0,1,2\}
 	$.  In particular,
 	$ 
 	\E_{p_3}(\vec\zeta_{p_1p_2p_3}^{*})=2
 	$ 
 	if and only if
 	$$
 	\{p_3\}_{p_2}<p_1
 	\qquad\text{and}\qquad
 	\left\{\left[\frac {p_3}{p_2}\right]\right\}_{p_1}=1.
 	$$
 In view of \eqref{P-coeff-rec-pol0},	this confirms the well known fact that the ternary cyclotomic polynomials  do not  need to be flat. A particular case  is the classical counterexample 
 	for $n=105=3\cdot 5\cdot 7$ giving $a_{105}(7)=
 	-	\E_{7}(\vec\zeta_{105}^{*})=-2$.
 	
 \end{Rem}

\end{document}